\documentclass{ifacconf}

\usepackage{natbib}        % required for bibliography
\usepackage{bm}
\usepackage{amsmath}
\usepackage{amsfonts}
\usepackage{amssymb}
\usepackage{import}
\usepackage{graphicx}
\usepackage{color}
\usepackage{comment}
\makeatletter
\providecommand{\hyper@nopatch@sectioning}{}
\makeatother
\usepackage{hyperref}
\makeatletter
\AtBeginDocument{%
}
\makeatother
\newcommand{\IdentityMatrix}[1]{\bm{I}}
\newcommand{\ZerosMatrix}[2]{\ensuremath{\bm{0}}}
\newcommand{\Expect}[1]{\mathcal{E}\left\{#1\right\}}

\newtheorem{lemma}{Lemma}
\newtheorem{theorem}{Theorem}
\newtheorem{remark}{Remark}

\newcommand{\GR}[1]{\textcolor{black}{#1}}

\newtheorem{proof}{Proof}

\allowdisplaybreaks
\begin{document}
\begin{frontmatter}

\title{Stochastic Robust Linear {$\mathcal{W}_\infty$} Control via Dynamic Output Feedback\thanksref{footnoteinfo}} 

\thanks[footnoteinfo]{This work was supported in part by the  CAPES through the Academic Excellence Program (PROEX), CNPq under Grants 317058/2023-1 and 422143/2023-5, FAPEMIG under Grant BPD-00960-22, and in part by Petrobras/ANP under Grants 2023/00494-5 and 2023/00643-0.}

\author[First]{Daniel Neri Cardoso} 
\author[First,Second]{Guilherme Vianna Raffo} 

\address[First]{Graduate Program in Electrical Engineering - Universidade Federal de Minas Gerais - Av. Antônio Carlos 6627, 31270-901, Belo Horizonte, MG, Brazil (e-mail: \{danielneri, raffo\}@ufmg.br)}
\address[Second]{Department of Electronic Engineering - Universidade Federal de Minas Gerais, Belo Horizonte, MG, Brazil}

\begin{abstract}               
%This paper introduces a robust $\mathcal{W}_\infty$ optimal control framework for linear Itô stochastic systems using a weighted Sobolev-space performance measure defined on the expected state and its weak derivative. The associated Hamilton–Jacobi equation is derived, and an equivalent LMI-based semidefinite programming method is proposed for dynamic output-feedback synthesis. A rigorous stability analysis ensures mean-square ultimate boundedness with minimized ultimate bound. A numerical example illustrates the effectiveness of the proposed method.
%{ESTENDER UM POUCO O ABSTRACT}
This paper introduces a robust $\mathcal{W}_\infty$ optimal control framework for linear Itô diffusions using a weighted Sobolev-space performance measure. Because the sample paths of Itô diffusions are nondifferentiable, the formulation leverages the weak derivative of the expected state. An LMI-based semidefinite program is developed for dynamic output-feedback synthesis, and a rigorous stability analysis guarantees mean-square ultimate boundedness with minimized ultimate bound. A numerical example demonstrates that the proposed approach provides effective disturbance attenuation with fast transient performance.
\end{abstract}

\begin{keyword}
Optimal Control, Stochastic Control, Robust Control, Dynamic Output Feedback, {$\mathcal{W}_\infty$} Control, Sobolev-Space-Based Performance Measures.
\end{keyword}

\end{frontmatter}
%===============================================================================

\vspace{-2mm}
\section{Introduction}
\vspace{-2mm}

Since the late 1990s, the control systems \GR{field} has \GR{seen} rising interest in stochastic controllers. Unlike deterministic models, stochastic \GR{systems} incorporate intrinsic randomness \GR{to more realistically represent} real-world \GR{dynamics, motivating} extensive research on robust optimal control. Early contributions \GR{have included} mixed $\mathcal{H}_2/\mathcal{H}_\infty$ control for linear stochastic differential systems \GR{\citep{peters1994mixed} and robust $\mathcal{H}_\infty$ controllers under stochastic parameter uncertainty \citep{ugrinovskii1998robust}}. Extensions to nonlinear settings with state-dependent \GR{Itô} noise have been explored in \cite{zhang2006state} within a mixed $\mathcal{H}_2/\mathcal{H}_\infty$ framework\GR{, while \cite{zhu2018mixed}} recently have proposed a mixed $\mathcal{H}_\infty$–passivity design for stochastic nonlinear systems under aperiodic sampling.

\GR{Robust} optimal controllers is traditionally grounded in $\mathcal{L}_2$-space, \GR{where the} $\mathcal{L}_2$-norm serves as a measure of signal energy. \GR{Within} this energy-based \GR{framework}, $\mathcal{H}_\infty$ theory is widely adopted \GR{for its ability to attenuate} exogenous disturbances below a predefined level $\gamma_{\mathcal{H}_\infty}$ while minimizing a specified cost variable under worst-case scenarios \citep{van1992sub}. This makes $\mathcal{H}_\infty$ control particularly \GR{effective for systems with intrinsic uncertainties and unknown disturbances, with applications ranging from} biological \GR{and power systems} \citep{wu2011multiobjective,sedhom2020multistage} \GR{to} macroeconomic models \citep{ma2012infinite}.

Despite its advantages, classical $\mathcal{H}_\infty$ control focuses on signal energy \GR{rather than} the rate of variation\GR{, which} often \GR{results in} oscillatory transients~\citep{chilali1996h}. \GR{While weighting input energy} in the cost variable \GR{is a common workaround, it can compromise} performance, \GR{as effective} disturbance attenuation \GR{frequently requires} additional control \GR{effort. Furthermore}, in multiple-input multiple-output (MIMO) systems, the \GR{relationship between} individual input \GR{weights} and the cost variable\GR{'s rate of variation} is \GR{often non-trivial} to determine.

To address this, \cite{aliyu2011extending} have introduced a Sobolev-space ({spaces} of functions in $\mathcal{L}_p$ {space} whose weak derivatives up to order $m$ also lie in $\mathcal{L}_p$ {space}~\citep{treves2016topological}) performance measure that incorporates the cost variable\GR{'s rate of variation}. This $\mathcal{W}_\infty$ {framework} \GR{has been} extended to weighted Sobolev spaces in~\cite{AUT2019}, where comparative {analyses have} demonstrated \GR{superior} transient behavior and faster disturbance attenuation \GR{than} $\mathcal{L}_2$-based formulations. However, existing $\mathcal{W}_\infty$ developments remain restricted to deterministic systems. In stochastic settings, the sample paths of Itô diffusions are almost surely non-differentiable~\citep{shreve2004stochastic}, \GR{preventing the direct application of weak derivatives and the standard $\mathcal{W}_\infty$ approach.}

Motivated by \GR{these} gaps, this work \GR{demonstrates} that \GR{while the Itô diffusion} sample paths are almost surely non-differentiable, the derivative of their expected values exists and can be treated as a weak derivative. This enables a consistent extension of the $\mathcal{W}_\infty$ framework to stochastic linear systems with bounded diffusion coefficients. \GR{We} propose the stochastic $\mathcal{W}_\infty$ {optimal control problem} (OCP) and derive a dynamic output-feedback controller \GR{for systems with} partial state availability. The OCP is formulated \GR{using the} {Hamilton-Jacobi-Bellman-Isaacs} equation and \GR{resolved via} semidefinite program (SDP) \GR{for} efficient synthesis. \GR{Finally,} stability analysis establishes mean-square ultimate boundedness and \GR{proves} that the proposed controller minimizes the corresponding ultimate bound.

The contributions of this work are twofold: (i) we formulate a stochastic robust $\mathcal{W}_\infty$ OCP for systems \GR{with bounded} diffusion coefficients and provide a stability analysis for the resulting dynamic output-feedback controllers; and (ii) we derive {linear matrix inequality} (LMI) conditions that recast the OCP \GR{as} a semidefinite program (SDP), enabling the straightforward synthesis of stochastic robust $\mathcal{W}_\infty$ controllers for linear systems.
\vspace{-1mm}

\subsubsection{Notation and definitions:} Italic lowercase letters denote scalars, boldface italic lowercase letters denote vectors, and boldface italic uppercase letters denote matrices. The superscripts {\small$(\cdot)'$\normalsize} and {\small$(\cdot)^{-1}$\normalsize} denote the transpose and inverse operations, respectively. \small$\mathbb{N} \triangleq \{1,2,...\}$\normalsize, \small$\mathbb{R} \triangleq (-\infty, \infty)$, $\mathbb{R}_{\geq 0} \triangleq [0, \infty )$\normalsize, \small$\mathbb{R}^n \triangleq \{\bm{r} = [r_1 \; ... \; r_n]':r_i \in \mathbb{R}\}$\normalsize, and \small$\mathbb{R}^{n\times m} \triangleq \{\bm{R} = [\bm{r}_1 \;... \;\bm{r}_m]:\bm{r}_i \in \mathbb{R}^n, i \in \{1, 2, \cdots, m\}\}$\normalsize. \small$\ZerosMatrix{n}{m}$ \normalsize and \small$\IdentityMatrix{n}$ \normalsize are, respectively, zero and identity matrices with appropriate dimensions. Let \small$t \in \mathbb{R}_{\geq 0}$ \normalsize denote the time variable and \small$v(\bm{x},t): \mathbb{R}^{n_x}\times \mathbb{R}_{\geq 0} \to \mathbb{R}$\normalsize, then \small$v_x ={\partial v(\bm{x},t)}/{\partial \bm{x}}$ \normalsize and \small$v_t ={\partial v(\bm{x},t)}/{\partial t}$\normalsize. Let \small$\bm{z}(t): \mathbb{R}_{\geq 0} \rightarrow \mathbb{R}^{n_z}$ \normalsize be a time-varying function, then \small$\dot{\bm{z}}(t) \triangleq {d\bm{z}(t)}/{dt}$ \normalsize denotes its first time derivative.
Let \small$p \in \mathbb{N}\cup\{\infty\}$, $m \in \mathbb{N}$, and $\bm{z}: \Psi \to \mathbb{R}^{n_z}$\normalsize. If \small$\bm{z}$ \normalsize belongs to the weighted Lebesgue space, i.e. \small$\bm{z} \in \mathcal{L}_{p,\bm{\Lambda}}[\Psi]$\normalsize, then its weighted \small$\mathcal{L}_{p}$\normalsize-norm is finite, \small$||\bm{z}||_{\mathcal{L}_{p,\bm{\Lambda}}} \triangleq \left(\int_{\Psi} ||\bm{\Lambda}^{1/p}\bm{z}||_p^p~d\Psi\right)^{{1}/{p}} < \infty$\normalsize, where \small$\bm{\Lambda}$ \normalsize is a positive definite symmetric matrix with appropriate dimension. If \small$\bm{z}$ \normalsize belongs to the weighted Sobolev space, i.e. \small$\bm{z} \in \mathcal{W}_{m,p,\bm{\Gamma}}[\Psi]$\normalsize, then \small$||\bm{z}||_{\mathcal{W}_{m,p,\bm{\Gamma}}} \triangleq \big(\sum_{\alpha = 0}^{m} ||{\partial^{\alpha}\bm{z}}||_{\mathcal{L}_{p,\bm{\Gamma}}}^p\big)^{{1}/{p}} < \infty$\normalsize,  with \small$\bm{\Gamma}  \triangleq \{\bm{\Gamma}_0, ..., \bm{\Gamma}_m\}$\normalsize, where \small${\partial^{\alpha}\bm{z}}$ \normalsize is the \small$\alpha$\normalsize-th weak derivative of \small$\bm{z}$\normalsize. It is well known that the weak derivative coincides with the standard derivative when the latter exists. The definitions of the Lebesgue \small$\mathcal{L}_{p}$ \normalsize and Sobolev \small$\mathcal{W}_{m,p}$ \normalsize spaces follow straightforwardly considering identity matrices as weights.
Let \small$\left(\Omega,\mathcal{F},\{\mathcal{F}_t\}_{t\geq 0},\mathbb{P}\right)$ \normalsize be a complete filtered probability space, where \small$\Omega$ \normalsize is a nonempty set, \small$\mathcal{F}$ \normalsize is a \small$\sigma$\normalsize-algebra of the subsets of \small$\Omega$\normalsize, \small$\{\mathcal{F}_t\}_{t\geq 0}$ \normalsize is a filtration, and \small$\mathbb{P}$ \normalsize is a probability measure \citep{shreve2004stochastic}. The standard Brownian motion \small$m(t) \in \Omega$\normalsize, such that \small$m(t): \mathbb{R}_{\geq 0} \to \mathbb{R}$\normalsize, is adapted to the filtration \small$\{\mathcal{F}_t\}_{t\geq 0}$ \normalsize and is a martingale with respect to this filtration. It has independent increments and satisfies \small$m(0) {=} 0$\normalsize{,} \small$\left(m(t_{f}) {-} m(t_o)\right) {\sim} \mathcal{N}\left(0,t_{f} {-} t_o\right),{~} \forall \left(t_{f} > t_o\right)$\normalsize. If \small$\bm{z}(t)$ \normalsize is \small$\mathcal{F}_t$\normalsize-measurable, then the information available at  {\small$t$\normalsize} is sufficient to evaluate {\small$\bm{z}(t)$\normalsize} at that time. If \small$\bm{z}(t) {\in} \mathcal{L}^{\mathcal{F}_t}_p[0,\infty)$\normalsize, then \small$\bm{z}(t)$ \normalsize is \small$\mathcal{F}_t$\normalsize-measurable and \small$\Expect{||\bm{z}(t)||_{\mathcal{L}_p}} {<} \infty$\normalsize. Finally, \small$\Expect{\bm{z}(t)|\mathcal{F}_t}$ \normalsize represents the conditional expectation operator given the information available up to \small$t$\normalsize, denoted by \small$\mathcal{F}_t$\normalsize.
\vspace{-2mm}
\section{Preliminaries}
\vspace{-2mm}
\label{Preliminaries}

Consider the nonlinear time-varying {dynamical} system
\begin{align}
\label{GenericNonlinearSystem}
\mathcal{P}_1:\begin{cases}
    \dot{\bm{x}}(t) &= f(\bm{x},\bm{u},\bm{w},{t}), \\
    \bm{z}(t) &= h(\bm{x},\bm{u}),~~\bm{x}(0) = \bm{x}_0,
\end{cases}
\end{align}
where $t \in \mathbb{R}_{\geq 0}$, $\bm{x}: \mathbb{R}_{\geq 0} \to \mathbb{R}^{n_x}$ is the state vector, $\bm{u}: \mathbb{R}_{\geq 0} \to \mathbb{R}^{n_u}$ is the control input, $\bm{w}(t): \mathbb{R}_{\geq 0} \to \mathbb{R}^{n_w}$ is the disturbance, and $\bm{z}(t): \mathbb{R}_{\geq 0} \to \mathbb{R}^{n_z}$ is the cost variable. 
The classic nonlinear $\mathcal{H}_\infty$ control technique aims to find \GR{an admissible} control law $\bm{u}^*$ \GR{solving}
\begin{align}
   \bm{u}^* =  \arg\min_{\bm{u} \in \mathcal{U}}\max_{\bm{w} \in \mathcal{W}} \left(||\bm{z}(t)||^2_{\mathcal{L}_{2}} - \gamma_{\mathcal{H}}^2 ||\bm{w}(t)||^2_{\mathcal{L}_2}\right).
\end{align}
%where $ \mathcal{U}$ is the space of admissible control inputs, and $\mathcal{W} =\mathcal{L}_2[\mathbb{R}_{\geq 0})$.
If it exists, $\bm{u}^*$ ensures the closed-loop system is $\mathcal{L}_2$-stable with $\mathcal{L}_2$-gain $\gamma_{\mathcal{H}} \in \mathbb{R}_{> 0}$ \citep{van1992sub}, guaranteeing
    \small$||\bm{z}(t)||^2_{\mathcal{L}_{2}} \leq \gamma_{\mathcal{H}}^2 ||\bm{w}(t)||^2_{\mathcal{L}_2}+ c_1$, \normalsize
for all $\bm{w} \in \mathcal{W} {=} \mathcal{L}_2[\mathbb{R}_{> 0}]$, \GR{where} $c_1\in \mathbb{R}_{> 0}$ \GR{accounts for the energy contribution of initial conditions. Minimizing} $\gamma_{\mathcal{H}} \in \mathbb{R}_{>0}$ is \GR{desired as it represents the worst-case energy gain from disturbances to the cost variable. However,} $\mathcal{H}_\infty$ control \GR{neglects} the influence of the cost variable\GR{'s rate of change, motivating} the $\mathcal{W}_\infty$ optimal control \GR{framework}.

\GR{The} $\mathcal{W}_\infty$ framework \GR{incorporates dynamics via} a weighted Sobolev norm\GR{.} The optimal control law solves
%\vspace{-1mm}
\begin{align}
\label{OptimalWinfControlProblem}
\bm{u}^* {=} \arg\min_{\bm{u} \in \mathcal{U}} \max_{\bm{w} \in \mathcal{W}} \big(||\bm{z}(t)||^2_{\mathcal{W}_{1,2,\bm{\Gamma}}} {-} \gamma_{\mathcal{W}}^2 ||\bm{w}(t)||^2_{\mathcal{L}_2}\big),
\end{align}
ensuring $\mathcal{W}_{1,2,\Gamma}$-stability with gain $\gamma_{\mathcal{W}} \in \mathbb{R}_{\geq 0}$ \citep{AUT2019}, such that $||\bm{z}(t)||^2_{\mathcal{W}_{1,2,\bm{\Gamma}}} \leq \gamma_{\mathcal{W}}^2 ||\bm{w}(t)||^2_{\mathcal{L}_2} + c_2$ for all $\bm{w} \in \mathcal{W}$, $c_2 > 0$. Minimizing $\gamma_{\mathcal{W}}$ accelerates disturbance attenuation and yields smoother transients \GR{with reduced} overshoot, \GR{as} disturbances affect \GR{signal} derivatives before the cost variable itself. However, \GR{stochastic processes} are almost surely non-differentiable\GR{, lacking} weak derivatives. To resolve this, we propose \GR{a} stochastic $\mathcal{W}_\infty$ control \GR{formulation based on} the expected value of the cost variable.

\vspace{-2mm}
\section{Stochastic Robust Linear $\mathcal{W}_\infty$ Control}
\vspace{-2mm}
\label{RobustWStochasticApproach}

To establish the stochastic robust linear $\mathcal{W}_\infty$ OCP, consider the following Itô type stochastic differential equation:
\begin{align}
    \label{GenericStochasticSystem}
    \hspace{-2mm}\mathcal{P}_2{:}\begin{cases}
            d\bm{x}(t) = \bm{f}(\bm{x},\bm{u},\bm{w}) dt + \bm{E}_x(t) dm(t), \\
            \bm{y}(t) = \bm{A}_y\bm{x} + \bm{E}_y d\delta(t) \\
     \bm{z}(t) = \mathcal{E}\{\bm{x}(t)\}, \\
     \mathcal{E}\{\bm{x}(0)\} = \bm{x}_0,
    \end{cases}
\end{align}
where $\bm{x}_0 \in \mathbb{R}^{n_x}$, $m(t): \mathbb{R}_{\geq 0} \mapsto \mathbb{R}^{n_m}$ and  $\delta(t): \mathbb{R}_{\geq 0} \mapsto \mathbb{R}^{n_\delta}$ represent vectors whose elements are standard Brownian motion defined on the filtered probability space $\left(\Omega,\mathcal{F},\{\mathcal{F}_t\}_{t\geq 0},\mathcal{P}\right)$. Furthermore, $\bm{f}(\bm{x},\bm{u},\bm{w}) \triangleq \bm{A}\bm{x} + \bm{B}\bm{u} + \bm{D}\bm{w}$ is the drift coefficient, in which $\bm{x}(t)$, $\bm{u}(t)$, and $\bm{w}(t)$ are $\mathcal{F}_t$-measurable variables that follow the same definition as given in \eqref{GenericNonlinearSystem}, $\bm{E}_x(t): \mathbb{R}_{\geq 0} \mapsto \mathbb{R}^{n_x \times n_m}$ is the diffusion coefficient, $\bm{y}(t): \mathbb{R}_{\geq 0} \mapsto \mathbb{R}^{n_y}$ is the vector of measured variables, $\bm{A}_y  \in\mathbb{R}^{n_y\times n_x}$, and $\bm{E}_y \in \mathbb{R}^{n_y\times n_\delta}$.

\begin{remark}
    Given the stochastic system described in~\eqref{GenericStochasticSystem}, the state trajectories $\bm{x}(t)$ are, in general, non-differentiable and do not possess weak derivatives. To overcome this limitation, our analysis focuses on the expected value of the state. We formulate the control problem to ensure that $\Expect{\bm{x}(t)} \in \mathcal{W}_{1,2,\bm{\Gamma}}[0,\infty)$. 
\end{remark}

%\vspace{-2mm}
Taking into account \eqref{GenericStochasticSystem}, the stochastic robust $\mathcal{W}_\infty$ {control law} aims to achieve
%\vspace{-1mm}
\begin{align}
\label{StochasticWinfController}
    ||\bm{z}(t)||^2_{\mathcal{W}_{1,2,\bm{\Gamma}}} \leq \gamma_{\mathcal{W}}^2 ||\bm{w}(t)||^2_{\mathcal{L}_2}  + c_3, 
\end{align}
%\vspace{-1mm}
for all $\bm{w} \in \mathcal{L}^{\mathcal{F}_t}_2[0,\infty)$, $c_3\in \mathbb{R}_{> 0}$, with a given, sufficiently large, $\gamma_{\mathcal{W}} \in \mathbb{R}_{\geq0}$, where $\bm{\Gamma}  \triangleq \{\bm{\Gamma}_0, \bm{\Gamma}_1\}$ {is a} set of positive definite tuning matrices. 

\begin{lemma}
     Suppose that system \eqref{GenericStochasticSystem} satisfies the inequality \eqref{StochasticWinfController} and that $\bm{w}(t) \in \mathcal{L}^{\mathcal{F}_t}_2[0,\infty)$. Under these conditions, the system is guaranteed to achieve mean-square stability.
\end{lemma}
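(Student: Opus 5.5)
The plan is to read off from inequality \eqref{StochasticWinfController} an integrability property of the mean trajectory $\bm{z}(t)=\Expect{\bm{x}(t)}$ and of its derivative. That property, together with the linear structure of the drift, will force the first moment to converge to zero. I will then combine this with the second-moment (covariance) dynamics to conclude mean-square stability, understood in the sense used later in the paper: $\Expect{||\bm{x}(t)||^2}$ stays bounded and settles to a bounded neighborhood determined by the diffusion term. Since $\bm{E}_x(t)$ is only assumed bounded, one cannot expect $\Expect{||\bm{x}||^2}\to 0$ in general, so this is the notion I will prove.

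\textbf{Step 1 (first moment).} Taking expectations in \eqref{GenericStochasticSystem} and using that the Itô integral $\int \bm{E}_x\,dm$ has zero mean, I get $\dot{\bm{z}}(t) = \bm{A}\bm{z} + \bm{B}\Expect{\bm{u}} + \bm{D}\Expect{\bm{w}}$. This shows that the classical derivative of $\bm{z}$ exists and is therefore its weak derivative. Since $\bm{w}\in\mathcal{L}^{\mathcal{F}_t}_2[0,\infty)$, the right-hand side of \eqref{StochasticWinfController} is finite. Using the positive definiteness of $\bm{\Gamma}_0,\bm{\Gamma}_1$, I then obtain $\bm{z}\in\mathcal{L}_2[0,\infty)$ and $\dot{\bm{z}}\in\mathcal{L}_2[0,\infty)$.

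\textbf{Step 2 (mean converges).} I apply the standard Barbalat-type argument: $\frac{d}{dt}||\bm{z}||^2 = 2\bm{z}'\dot{\bm{z}}$ is integrable by Cauchy--Schwarz. Hence $||\bm{z}(t)||^2$ has a limit as $t\to\infty$. Because $\int_0^\infty ||\bm{z}||^2dt<\infty$, that limit must be zero. Thus $\Expect{\bm{x}(t)}\to 0$ and $\sup_t||\Expect{\bm{x}(t)}||<\infty$.

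\textbf{Step 3 (second moment).} I write $\Expect{||\bm{x}||^2} = ||\Expect{\bm{x}}||^2 + \mathrm{tr}\,\bm{\Sigma}(t)$, where $\bm{\Sigma}$ is the covariance. By Itô's formula, $\dot{\bm{\Sigma}} = \bm{A}_{cl}\bm{\Sigma}+\bm{\Sigma}\bm{A}_{cl}' + \bm{E}_{cl}\bm{E}_{cl}'$ (closed-loop, with controller states appended). The deterministic part of the closed loop is the same linear map that generates $\bm{z}$. Since the bound in Step 1 holds for every admissible initial mean $\bm{x}_0$ (with $\bm{w}=0$, say), $\mathcal{L}_2$-integrability of all free responses implies that $\bm{A}_{cl}$ is Hurwitz (Datko's theorem). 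The Lyapunov differential equation is then exponentially stable, and boundedness of $\bm{E}_x(t)$ and $\bm{E}_y$ gives $\sup_t \mathrm{tr}\,\bm{\Sigma}(t)<\infty$, with an explicit ultimate bound. Adding the two pieces yields bounded, ultimately bounded second moments, i.e.\ mean-square stability.

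\textbf{Main obstacle.} The delicate point is Step 3's passage from an inequality on the \emph{expected} trajectory to Hurwitz stability of the closed-loop matrix. The inequality \eqref{StochasticWinfController} is stated for the mean only, so I must argue that it holds uniformly over initial conditions and that the controller is linear, so that Datko's theorem applies to the mean dynamics. If the paper intends a weaker notion of mean-square stability, the argument reduces to Steps 1--2 plus Jensen's inequality, $||\Expect{\bm{x}}||^2\le\Expect{||\bm{x}||^2}$, and the covariance bound from bounded diffusion.
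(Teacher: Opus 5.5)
Your Steps 1--2 are correct. They match, and slightly exceed, what the paper's proof establishes: the paper only observes that $\bm{z}\in\mathcal{W}_{1,2,\bm{\Gamma}}\subset\mathcal{L}_2$ and then invokes Zhang et al.\ to call this mean-square stability, whereas you also get $\Expect{\bm{x}(t)}\to\bm{0}$.

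The genuine gap is Step 3, which is the only place a second-moment conclusion could come from. Datko's theorem needs $\int_0^\infty\|e^{\bm{A}_{cl}t}\bm{\xi}\|^2dt<\infty$ for \emph{every} $\bm{\xi}$ in the full closed-loop state space $(\bm{x},\bm{x}_f)$. But \eqref{StochasticWinfController} controls only the plant block $\Expect{\bm{x}}$ of the mean. Even granting the uniformity in $\bm{x}_0$, which the lemma does not provide, it does so only from initial data with the controller state fixed. Controller modes that are invisible in $\bm{x}$, or not excited by plant initial conditions, are therefore unconstrained, so ``$\bm{A}_{cl}$ Hurwitz'' does not follow. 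Worse, your $\bm{E}_{cl}$ contains $\bm{B}_f\bm{E}_y$, so noise can excite exactly such modes while their mean stays zero.

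For example, take $dx=(-x+u+w)dt+dm$, $\bm{A}_y=[1\;\;0]'$, $\bm{E}_y=\mathrm{diag}(\epsilon_1,\epsilon_2)$ with $\epsilon_2\neq0$, so the second channel is pure noise. Use the controller $dx_f=(x_f+y_2)dt$, $u=x_f$, $x_f(0)=0$. Then $\Expect{x_f}\equiv0$ and $\bar{x}=\Expect{x}$ obeys $\dot{\bar{x}}=-\bar{x}+\Expect{w}$, so \eqref{StochasticWinfController} holds for every $w$ and every $x_0$. Yet $dx_f=x_f\,dt+\epsilon_2\,d\delta_2$ has variance growing like $e^{2t}$, and $\Expect{x^2}$ grows at the same rate. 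So \eqref{StochasticWinfController} plus linearity cannot yield mean-square boundedness. You need an extra structural hypothesis, e.g.\ that $\bm{A}_{cl}$ is Hurwitz, or conditions ensuring every mode excited by the noise also shows up in $\Expect{\bm{x}}$. In the paper's Theorem, Hurwitz-ness is supplied by the Lyapunov LMI, not by \eqref{StochasticWinfController}.

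Your fallback does not close the gap either. Jensen's inequality $\|\Expect{\bm{x}}\|^2\le\Expect{\|\bm{x}\|^2}$ bounds the second moment from \emph{below}. Bounded diffusion alone does not bound the covariance: for $dx=dm$, $\mathrm{Var}(x(t))=t$. An upper bound on $\mathrm{tr}\,\bm{\Sigma}(t)$ requires precisely the exponential stability that Step 3 fails to establish.

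You were right that this bridge is the real difficulty. The paper's citation does not supply it either, since the usual $\mathcal{L}_2$ characterizations of mean-square stability concern $\Expect{\int_0^\infty\|\bm{x}\|^2dt}<\infty$ rather than $\Expect{\bm{x}}\in\mathcal{L}_2$. As written, though, your proposal does not resolve it.
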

%\vspace{-1mm}
\begin{proof}
If $\bm{w}(t) \in \mathcal{L}^{\mathcal{F}_t}_2[0,\infty)$ and inequality \eqref{StochasticWinfController} is satisfied, then, $\bm{z}(t) \in \mathcal{W}_{1,2,\bm{\Gamma}}[0,\infty)$. Since the weighted Sobolev space is a subspace of the Lebesgue space $\mathcal{L}_2[\mathbb{R}_{> 0}]$, this leads to the following implications: $\bm{z}(t) \in \mathcal{W}_{1,2,\bm{\Gamma}}[0,\infty) \implies \bm{z}(t) \in \mathcal{L}_2[\mathbb{R}_{> 0}]$, thereby ensuring the mean-square stability of the closed-loop system~\citep{zhang2017stochastic}. $\qquad\qquad\qquad\qquad\quad\qquad\qquad\qquad\qquad\;\;\;$\qed
\end{proof}

\begin{lemma}
\label{TheoremSobolevspace}
If a solution exists for the OCP
%\vspace{-1mm}
\begin{align}
\label{WinfControlProblem2}
    \hspace{-1mm}\bm{u}^* {=} \arg\min_{\bm{u}\in \mathcal{U}}\max_{\bm{w} \in \mathcal{W}} \big(||\bm{z}(t)||^2_{\mathcal{W}_{1,2,\bm{\Gamma}}} - \gamma_{\mathcal{W}}^2 ||\bm{w}(t)||^2_{\mathcal{L}_2} \big),
\end{align}
for all $\bm{w} \in \mathcal{L}^{\mathcal{F}_t}_2[0,\infty)$, then the stochastic linear system \eqref{GenericStochasticSystem}, with the optimal control $\bm{u}^*$, satisfies  \eqref{StochasticWinfController}.
\end{lemma}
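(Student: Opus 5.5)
The plan is to argue through the saddle-point (value) of the min--max problem \eqref{WinfControlProblem2}, as in the deterministic $\mathcal{W}_\infty$ results of \cite{AUT2019} and the classical $\mathcal{H}_\infty$ game argument of \cite{van1992sub}. Define the cost functional
\begin{align*}
J(\bm{u},\bm{w}) \triangleq ||\bm{z}(t)||^2_{\mathcal{W}_{1,2,\bm{\Gamma}}} - \gamma_{\mathcal{W}}^2 ||\bm{w}(t)||^2_{\mathcal{L}_2},
\end{align*}
where $\bm{z}(t)=\Expect{\bm{x}(t)}$. Before using it, check that $J$ is well posed. Taking expectations in the drift of \eqref{GenericStochasticSystem} and using that the Itô integral $\int_0^t \bm{E}_x\, dm$ has zero mean shows that $\bm{z}(t)$ solves the deterministic linear ODE
\begin{align*}
\dot{\bm{z}} = \bm{A}\bm{z} + \bm{B}\Expect{\bm{u}} + \bm{D}\Expect{\bm{w}}.
\end{align*}
Hence $\bm{z}$ is absolutely continuous, its classical derivative exists, and that derivative serves as the weak derivative $\partial^1\bm{z}$. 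So $||\bm{z}||_{\mathcal{W}_{1,2,\bm{\Gamma}}}$ makes sense, and the diffusion coefficient plays no role in $J$ beyond the admissibility of the controls.

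Next, suppose a solution $\bm{u}^*$ exists, meaning the optimal value
\begin{align*}
V^* \triangleq \min_{\bm{u}\in\mathcal{U}}\max_{\bm{w}\in\mathcal{W}} J(\bm{u},\bm{w})
\end{align*}
is attained at $\bm{u}^*$ and is finite for the given initial condition $\bm{x}_0$. I will treat finiteness as part of the meaning of ``a solution exists,'' since a minimizer with value $+\infty$ would be vacuous. By definition of the maximum, every $\bm{w}\in\mathcal{L}^{\mathcal{F}_t}_2[0,\infty)$ gives $J(\bm{u}^*,\bm{w}) \le V^*$. Rearranging yields
\begin{align*}
||\bm{z}||^2_{\mathcal{W}_{1,2,\bm{\Gamma}}} \le \gamma_{\mathcal{W}}^2||\bm{w}||^2_{\mathcal{L}_2} + V^*.
\end{align*}
Setting $c_3 \triangleq \max\{V^*,\epsilon\}$ for any $\epsilon>0$ gives $c_3\in\mathbb{R}_{>0}$, which is exactly \eqref{StochasticWinfController}.

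To make $V^*$ concrete, and to show that it depends only on the initial data, I would identify it with a storage function $v(\bm{x}_0,0)$. This function solves the Hamilton--Jacobi--Bellman--Isaacs equation for the expected dynamics, for example $v=\bm{z}'\bm{P}\bm{z}$ in the linear case. Integrating $\dot v$ along trajectories (Dynkin/Itô formula with expectation) and completing squares in $\bm{w}$ gives
\begin{align*}
v(T) - v(0) + \int_0^T \big(\bm{z}'\bm{\Gamma}_0\bm{z} + \dot{\bm{z}}'\bm{\Gamma}_1\dot{\bm{z}} - \gamma_{\mathcal{W}}^2\|\bm{w}\|^2\big)\,dt \le 0.
\end{align*}
Dropping $v(T)\ge 0$ and letting $T\to\infty$ recovers the bound with $c_3 = v(\bm{x}_0,0)$.

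The main obstacle is guaranteeing that $V^*$ is finite and independent of $\bm{w}$, rather than merely that a minimizer exists. There is also a concern about the diffusion term. If it entered the cost through $\Expect{\|\bm{x}\|^2}$ it would contribute unbounded accumulated variance. I avoid this because $\bm{z}$ is the mean, so the Itô correction term $\tfrac12\mathrm{tr}(\bm{E}_x'v_{xx}\bm{E}_x)$ does not appear in $J$. I would therefore work with expected-value dynamics throughout, and invoke boundedness of $\bm{E}_x$ only to ensure that the stochastic integrals are martingales with zero mean.
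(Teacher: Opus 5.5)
Your proposal is correct and follows the same route as the paper. Both proofs take $c_3$ to be the optimal min--max value and use that, with $\bm{u}^*$ fixed, every admissible $\bm{w}$ gives $J(\bm{u}^*,\bm{w})\le \max_{\bm{w}} J(\bm{u}^*,\bm{w}) = V^*$, which rearranges into \eqref{StochasticWinfController}. Your version is slightly tidier than the paper's: the paper keeps $\bm{z}^*$ fixed while varying $\bm{w}$ and never addresses finiteness or strict positivity of $c_3$, whereas you do, and your storage-function paragraph is an optional aside that the paper treats separately in Lemma~\ref{TheoremWinf}.
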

\begin{proof}
    Under the assumption of existence of a solution for \eqref{WinfControlProblem2}, then there also exists an optimal cost $v_{\mathcal{W}_\infty} \in \mathbb{R}$, such that 
    %\vspace{-2mm}
    \begin{align}
    \label{OptimalCost}
        v_{\mathcal{W}_\infty} = \min_{\bm{u}\in \mathcal{U}}\max_{\bm{w} \in \mathcal{W}} ||\bm{z}(t)||^2_{\mathcal{W}_{1,2,\bm{\Gamma}}} - \gamma_{\mathcal{W}}^2 ||\bm{w}(t)||^2_{\mathcal{L}_2} .
    \end{align}
    By denoting $\bm{z}^*$ and $\bm{w}^*$ as the min-max optimized values of $\bm{z}(t)$ and $\bm{w}(t)$, respectively, equation \eqref{OptimalCost} results in
    %\vspace{-1mm}
    \begin{align}
    \label{OptimalCost2}
        v_{\mathcal{W}_\infty} = ||\bm{z}^*||^2_{\mathcal{W}_{1,2,\bm{\Gamma}}} - \gamma_{\mathcal{W}}^2||\bm{w}^*||^2_{\mathcal{L}_2}.
    \end{align}
    However, if one considers an unknown disturbance $\bm{w}(t) \in \mathcal{L}^{\mathcal{F}_t}_2[0,\infty)$ instead of the worst-case scenario, then we have
   % \vspace{-2mm}
    \begin{align}
    \label{OptimalCost3}
        v_{\mathcal{W}_\infty} \geq ||\bm{z}^*||^2_{\mathcal{W}_{1,2,\bm{\Gamma}}} - \gamma_{\mathcal{W}}^2 ||\bm{w}(t)||^2_{\mathcal{L}_2},
    \end{align}
    which is equivalent to \eqref{StochasticWinfController}, concluding the proof. $\qquad$\qed
\end{proof}

%\vspace{-2mm}
The formulation of the stochastic robust $\mathcal{W}_\infty$ {optimal control problem} requires the existence of the weak derivative of the cost variable. The following lemma establishes the weak derivative of $\bm{z}(t)$.

\begin{lemma}
\label{TheoremWeakDerivative}
    Consider the stochastic system governed by the Itô differential equation \eqref{GenericStochasticSystem}.  Then, the drift coefficient in \eqref{GenericStochasticSystem} serves as the weak derivative of $\bm{z}(t)$.
\end{lemma}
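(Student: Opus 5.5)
The plan is to write the Itô equation in \eqref{GenericStochasticSystem} in integral form, take expectations, and check the definition of the weak derivative against smooth compactly supported test functions. Concretely, I would start from
\begin{align*}
\bm{x}(t) = \bm{x}(0) + \int_0^t \bm{f}(\bm{x}(s),\bm{u}(s),\bm{w}(s))\,ds + \int_0^t \bm{E}_x(s)\,dm(s).
\end{align*}
I would then apply $\Expect{\cdot}$ to both sides. The key fact is that the Itô integral has zero mean. Since $\bm{E}_x(t)$ is a deterministic, bounded (hence locally square-integrable) diffusion coefficient, $\int_0^t \bm{E}_x(s)\,dm(s)$ is a square-integrable martingale starting at zero, so its expectation vanishes. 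This gives
\begin{align*}
\bm{z}(t) = \bm{x}_0 + \int_0^t \Expect{\bm{f}(\bm{x}(s),\bm{u}(s),\bm{w}(s))}\,ds,
\end{align*}
after interchanging expectation and the Lebesgue time integral by Fubini's theorem.

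The interchange is justified because $\bm{x}$, $\bm{u}$ and $\bm{w}$ are $\mathcal{F}_t$-measurable with finite second moments on compact intervals. This holds for $\bm{w}\in\mathcal{L}_2^{\mathcal{F}_t}$, for admissible $\bm{u}$, and for the strong solution $\bm{x}$ of a linear SDE with bounded coefficients. Since $\bm{f}$ is linear, $\Expect{\bm{f}} = \bm{A}\bm{z}+\bm{B}\Expect{\bm{u}}+\bm{D}\Expect{\bm{w}}$, which is locally integrable in time.

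The representation above shows that $\bm{z}$ is absolutely continuous, with the integrand as its a.e. derivative. To match the paper's notion of weak derivative, take any test function $\phi\in C_c^\infty(0,\infty)$ and integrate by parts. Substituting the integral representation and applying Fubini once more gives
\begin{align*}
\int_0^\infty \bm{z}(t)\dot{\phi}(t)\,dt = -\int_0^\infty \Expect{\bm{f}(\bm{x},\bm{u},\bm{w})}\,\phi(t)\,dt.
\end{align*}
The constant term $\bm{x}_0$ drops out because $\int_0^\infty \dot\phi\,dt = 0$. This identifies the (expected) drift coefficient as the weak derivative $\partial^1\bm{z}$. Where the integrand is continuous, for example with continuous $\bm{u}$ and $\bm{w}$, it is in fact the classical derivative $\dot{\bm{z}}$, consistent with the remark in the notation section.

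The main obstacle is the technical justification of the two interchanges, namely $\Expect{\int \bm{E}_x\,dm}=0$ and Fubini for the drift term. I would handle both with the boundedness of $\bm{E}_x$ and standard moment estimates for linear SDEs, $\sup_{s\le T}\Expect{\|\bm{x}(s)\|^2}<\infty$, obtained via Itô's formula and Gronwall's inequality. The statement should also be read as saying that the drift evaluated in expectation, $\Expect{\bm{f}}$, is the weak derivative, since $\bm{f}(\bm{x},\bm{u},\bm{w})$ itself is random and $\bm{z}$ is deterministic.
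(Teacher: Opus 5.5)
Your proof is correct and shares the paper's skeleton: integral form of the SDE, split into $\bm{x}(0)$, the drift integral and the It\^{o} integral, kill the It\^{o} term by its zero mean, and differentiate the drift integral. You treat the drift term differently, though, and that is where the two arguments genuinely part ways. The paper writes $\Expect{\int_0^t\bm{f}\,ds}=\Expect{\int_0^t\bm{f}\,ds\mid\mathcal{F}_t}=\int_0^t\bm{f}\,ds$ and concludes that the random process $\bm{f}(t)$ itself is $\partial\bm{z}$. For the It\^{o} term, by contrast, it conditions increment by increment on $\mathcal{F}_{t_k}$, which amounts to the unconditional expectation. Conditioning both terms on $\mathcal{F}_t$ would simply return $\bm{x}(t)$, since the It\^{o} integral up to $t$ is $\mathcal{F}_t$-measurable and would not vanish.

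You instead use the unconditional expectation throughout, plus Fubini. This gives $\bm{z}(t)=\bm{x}_0+\int_0^t\Expect{\bm{f}}\,ds$ and hence $\partial\bm{z}=\Expect{\bm{f}}=\bm{A}\bm{z}+\bm{B}\Expect{\bm{u}}+\bm{D}\Expect{\bm{w}}$. This is deterministic, as the weak derivative of the deterministic $\bm{z}$ must be, and by linearity it equals the drift evaluated along the means. The paper's route is shorter and hands over the literal $\bm{f}(t)$ that it later inserts into the Hamiltonian, but only through that conditioning step. Yours costs an integrability check and stays compatible with the later use: Jensen gives $\Expect{\bm{f}}'\bm{\Gamma}_1\Expect{\bm{f}}\le\Expect{\bm{f}'\bm{\Gamma}_1\bm{f}}$, so the penalized term still dominates the derivative part of $\|\bm{z}\|^2_{\mathcal{W}_{1,2,\bm{\Gamma}}}$. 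Your standard $C_c^\infty(0,\infty)$ test functions, combined with absolute continuity, also replace the paper's separate ``classical implies weak'' step.

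For the integrability check, Fubini only needs $\Expect{\int_0^T\|\bm{f}\|\,ds}<\infty$, which follows from first-moment bounds via the variation-of-constants formula. Working with first moments avoids assuming $\Expect{\|\bm{x}(0)\|^2}<\infty$, which the paper does not provide. It also fits the paper's definition of $\mathcal{L}^{\mathcal{F}_t}_2$, which only asks for $\Expect{\|\bm{w}\|_{\mathcal{L}_2}}<\infty$ rather than a finite second moment.
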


\begin{proof}
    By definition, the weak derivative of a generic function $\bm{h}(t): \mathbb{R}_{\geq 0} \to \mathbb{R}^{n_x}$, where $\bm{h}(t) \in \mathcal{L}^{\mathcal{F}_t}_1[0,\infty)$, is a function $\bm{a}(t): \mathbb{R}_{\geq 0} \to \mathbb{R}^{n_x}$, with $\bm{a}(t) \in \mathcal{L}^{\mathcal{F}_t}_1[0,\infty)$, that satisfies the following condition \citep{knabner2003numerical}:
   % \vspace{-1mm}
\begin{align}
\label{WeakDerivative}
    \lim_{t \to \infty}\int_{0}^{t} \bm{h}'(s)\dot{\bm{\psi}}(s)ds &= - \lim_{t \to \infty}\int_{0}^{t} \bm{a}'(s) \bm{\psi}(s) ds,
\end{align}
for any $\mathcal{F}_t$-measurable test function $\bm{\psi}(t)$, such that $\bm{\psi}(0) = \displaystyle\lim_{t \to \infty}\bm{\psi}(t) = \bm{0}$.

%\vspace{-2mm}
By applying the chain rule, assuming the time derivative of $\bm{h}(t)$ exists in the domain $t \in \mathbb{R}_{\geq 0}$, we obtain\footnote{The shorthand $\int_{0}^{\infty} \bm{z}(t)dt$ denotes $\displaystyle\lim_{t\to \infty}\int_{0}^{t} \bm{z}(s)ds$.}
%\vspace{-1mm}
\begin{align}
     \left.\bm{h}'(t)\bm{\psi}(t) \right|^\infty_0 &= \int^\infty_0 \left(\dot{\bm{h}}'(t)\psi(t) {+} \bm{h}'(t)\dot{\bm{\psi}}(t)\right)dt, \nonumber\\
     \int^\infty_0\bm{h}'(t)\dot{\bm{\psi}}(t)dt &= -\int^\infty_0 \dot{\bm{h}}'(t)\psi(t)dt,  \label{ExpandedWeakDerivative}
\end{align}
where we used the fact that $\left.\bm{h}'(t)\bm{\psi}(t) \right|^\infty_0 = 0$.  Therefore, it follows that the weak derivative of $\bm{h}(t)$, denoted by $\partial \bm{h}(t)$, is equivalent to its classical derivative $\dot{\bm{h}}(t)$, provided the latter exists. This result will be applied in the subsequent proof.
%\vspace{-1mm}

To proceed, we apply the Itô formula to the stochastic system given by \eqref{GenericStochasticSystem}, resulting in
%\vspace{-1mm}
\begin{align}
\label{ItoFormula}
 \bm{x}(t)  =   \bm{x}(0) + \int_0^t \bm{f}(s)ds + \int_0^t\bm{g}(s)dm(s), 
\end{align}
where, for simplicity, we assume $\bm{f}(t) = \bm{f}(\bm{x}, \bm{u}, \bm{w}, t)$ and $\bm{g}(t) = \bm{g}(\bm{x}, \bm{u}, \bm{w}, t)$.
Consequently, the weak derivative of $\bm{z}(t)$ is given by
%\vspace{-4mm}
\begin{align}
    \partial\bm{z}(t) &{=} \partial\Expect{\bm{x}(t)} {=} \partial\mathcal{E}\Big\{\overbrace{\bm{x}(0)}^{\text{1st}} {+} \overbrace{\int_0^t \!\!\!\bm{f}(s)ds}^{\text{2nd}} {+} \overbrace{\int_0^t\!\!\!\bm{g}(s)dm(s)}^{\text{3rd}}\!\Big\}.  \label{TermsExpand}
\end{align}
Since both the expected value and the weak derivative are linear operators, we can expand the three terms in \eqref{TermsExpand} separately.
%\vspace{-1mm}

The first term in \eqref{TermsExpand} is expanded as 
\begin{align}
\label{Term1}
    \partial\Expect{\bm{x}(0)} = \partial\bm{x}_0 = 0,
\end{align}
which was computed considering the fact that the classical derivative of $\bm{x}_0$ exists and is equal to zero. 
%\vspace{-1mm}

Given that function $\bm{f}(t)$ is $ \mathcal{F}_t$-measurable, the second term in \eqref{TermsExpand} is expanded as 
\begin{align}
    \partial\Expect{\!\int_0^t \!\bm{f}(s)ds } &{=} \partial\Expect{\!\int_0^t \bm{f}(s)ds \Big| \mathcal{F}_t } {=} \partial\! \int_0^t \bm{f}(s)ds {=} \bm{f}(t), \label{Term2}
\end{align}
where we used the fact that the classical derivative of $\int_0^t \bm{f}(s)ds$ exists and is equal to $\bm{f}(t)$.
%\vspace{-1mm}

Finally, to expand the third term in \eqref{TermsExpand}, let $\Pi_n = \{t_0,t_1,\cdots,t_n\}$ be a partition of $[0,~t]$, for $n \in \mathbb{N}$, that is, $0 = t_0 < t_1 < \cdots < t_n = t$. Then, we can replace $\bm{g}(t)$ with a simple process $\bm{g}_n(t)$ to compute the Itô's integral~\citep{shreve2004stochastic}, such that $\displaystyle\lim_{n \to \infty} \Expect{\int_0^\infty\big|\bm{g}(t) - \bm{g}_n(t)\big|dt} = 0$. Therefore, given the assumption that $\bm{g}(t)$ is $ \mathcal{F}_t$-measurable, we have that
\begin{align}
\label{Term3}
   &\partial\Expect{\int_0^t\bm{g}(s)dm(s)\Big|\mathcal{F}_{t}}  \\
    &= \partial\mathcal{E} \Big\{\lim_{n \to \infty}\sum_{k = 0}^{n-1} \bm{g}_n(t_k) \left[m(t_{k+1}) - m(t_k)\right]\Big|\mathcal{F}_{t_k}\Big\}, \nonumber \\
    &= \partial\Big(\lim_{n \to \infty}\sum_{k = 0}^{n-1} \bm{g}_n(t_k) \mathcal{E} \Big\{\left[m(t_{k+1}) {-} m(t_k)\right]\Big|\mathcal{F}_{t_k} \Big\}\Big) {=} 0, \nonumber
\end{align}
where it was used the facts that $\bm{g}(t) \in \mathcal{L}^{\mathcal{F}_t}_1[0,\infty)$, $m(0) = 0$, and $\mathcal{E} \Big\{\left[m(t_{k+1}) - m(t_k)\right]\Big\} = 0$. 

\vspace{-2mm}
Accordingly, combining \eqref{Term1}, \eqref{Term2}, and \eqref{Term3}, we conclude that
%\vspace{-2mm}
\begin{align}
    \partial\bm{z}(t) = \bm{f}(t),
\end{align}
%\vspace{-2mm}
which completes the proof. $\qquad\qquad\qquad\qquad\quad\qquad\;$\qed%\begin{flushright}\qed\end{flushright}
\end{proof}

\vspace{-1mm}
The stochastic {robust} $\mathcal{W}_\infty$ {controller} can be designed using Lemmas \ref{TheoremSobolevspace} and \ref{TheoremWeakDerivative}. To achieve this, the OCP \eqref{WinfControlProblem2} can be solved using dynamic programming through the Hamilton-Jacobi-Bellman-Isaacs (HJBI) equation \citep{yong2012stochastic}, which results in the following formulation:
\begin{align}
    v_t + \min_{\bm{u} \in \mathcal{U}} \max_{\bm{w} \in \mathcal{W}} \Expect{\mathcal{H}(v_x,v_{xx},\bm{x},\bm{u},\bm{w},t)} = 0,
    \label{HJBIeq}
\end{align}
with the Hamiltonian
%\begin{align}
%\label{Hamiltonian1}
		$\mathcal{H}(v_x,v_{xx}, \bm{x},\bm{u},\bm{w},t) = v'_x d\bm{x} + \dfrac{1}{2}d\bm{x}' v_{xx}d\bm{x} 
  + \left(\bm{x}' \bm{\Gamma}_0\bm{x} + \bm{f}' \bm{\Gamma}_1\bm{f} - \gamma_{\mathcal{W}}^2\bm{w}' \bm{w}\right)dt$, and boundary condition $v(\bm{0},t) = 0$, $\forall t \in \mathbb{R}_{\geq 0}$. The OCP thus leads to finding a solution $v(\bm{x},t) > 0$ of the {resulting} second-order partial differential equation {from} \eqref{HJBIeq}.

\begin{lemma}
\label{TheoremWinf}
If there exists a solution $v(\bm{x},t) > 0$ to the {resulting} second-order partial differential equation {from} \eqref{HJBIeq}, then the closed-loop system \eqref{GenericStochasticSystem}, under the control law $\bm{u}^* = \displaystyle {\arg}\min_{\bm{u} \in \mathcal{U}} \max_{\bm{w} \in \mathcal{W}} \Expect{\mathcal{H}(v_x,v_{xx},\bm{x},\bm{u},\bm{w},t)}$ satisfies inequality \eqref{StochasticWinfController}, with $c_3 = \Expect{v(\bm{x}(0), 0) - \lim_{s \to \infty} v(\bm{x}(s), s)}$. Furthermore, solving the {HJBI equation} \eqref{HJBIeq} {leads to the solution of the optimal control law} \eqref{WinfControlProblem2}.
\end{lemma}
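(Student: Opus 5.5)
The plan is a stochastic verification argument. We apply the Itô formula to the candidate value function $v(\bm{x}(t),t)$ along the closed-loop trajectories of \eqref{GenericStochasticSystem}, take expectations, and integrate over $[0,\infty)$. The HJBI equation \eqref{HJBIeq} then yields a dissipation inequality whose integrated form is exactly \eqref{StochasticWinfController}.

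\textbf{Step 1 (Itô differential).} For the $C^{2,1}$ function $v$, the Itô formula gives
\begin{align*}
dv = v_t\,dt + v_x' d\bm{x} + \tfrac{1}{2} d\bm{x}' v_{xx} d\bm{x},
\end{align*}
where $d\bm{x}' v_{xx} d\bm{x}$ reduces to $\mathrm{tr}(\bm{E}_x' v_{xx}\bm{E}_x)\,dt$ using the rules $dm\,dm' = \bm{I}\,dt$ and $dt\,dm = 0$. This identifies $dv$ with $v_t\,dt + \mathcal{H}(v_x,v_{xx},\bm{x},\bm{u},\bm{w},t)$ minus the running cost $(\bm{x}'\bm{\Gamma}_0\bm{x} + \bm{f}'\bm{\Gamma}_1\bm{f} - \gamma_{\mathcal{W}}^2\bm{w}'\bm{w})dt$, plus the martingale term $v_x'\bm{E}_x\,dm$.

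\textbf{Step 2 (expectations and the saddle point).} Take expectations. The stochastic integral has zero mean by the same simple-process argument as in \eqref{Term3}, assuming enough integrability (e.g., bounded $\bm{E}_x$ and quadratic growth of $v$). With $\bm{u}=\bm{u}^*$ and any admissible $\bm{w}$, the saddle-point property gives
\begin{align*}
\Expect{\mathcal{H}(\cdot,\bm{u}^*,\bm{w})} \le \Expect{\mathcal{H}(\cdot,\bm{u}^*,\bm{w}^*)} = -v_t .
\end{align*}
Substituting, we obtain
\begin{align*}
\frac{d}{dt}\Expect{v} + \Expect{\bm{x}'\bm{\Gamma}_0\bm{x} + \bm{f}'\bm{\Gamma}_1\bm{f}} - \gamma_{\mathcal{W}}^2\Expect{\bm{w}'\bm{w}} \le 0 .
\end{align*}

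\textbf{Step 3 (passing to $\bm{z}$).} By Lemma~\ref{TheoremWeakDerivative}, $\partial\bm{z} = \Expect{\bm{f}}$. By Jensen's inequality for the convex quadratic forms,
\begin{align*}
\Expect{\bm{x}}'\bm{\Gamma}_0\Expect{\bm{x}} \le \Expect{\bm{x}'\bm{\Gamma}_0\bm{x}}, \qquad \Expect{\bm{f}}'\bm{\Gamma}_1\Expect{\bm{f}} \le \Expect{\bm{f}'\bm{\Gamma}_1\bm{f}} .
\end{align*}
Integrating from $0$ to $t$ and letting $t\to\infty$ then gives
\begin{align*}
\|\bm{z}\|^2_{\mathcal{W}_{1,2,\bm{\Gamma}}} \le \gamma_{\mathcal{W}}^2\|\bm{w}\|^2_{\mathcal{L}_2} + \Expect{v(\bm{x}(0),0) - \lim_{s\to\infty} v(\bm{x}(s),s)} ,
\end{align*}
which is \eqref{StochasticWinfController} with the stated $c_3$.

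\textbf{Step 4 (optimality).} Along the saddle pair $(\bm{u}^*,\bm{w}^*)$ every inequality above becomes an equality. For any other $\bm{u}$, the maximizing $\bm{w}$ makes $\Expect{\mathcal{H}} \ge -v_t$, so the cost is at least $c_3$. Hence $\bm{u}^*$ attains the min-max in \eqref{WinfControlProblem2}; Lemma~\ref{TheoremSobolevspace} gives the same conclusion consistently.

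The main obstacle is the limit $s\to\infty$ together with the vanishing of the martingale expectation on an infinite horizon. With bounded diffusion, $\Expect{v(\bm{x}(s),s)}$ need not go to zero, which is why $c_3$ retains the limit term. We would first justify existence of that limit using $v>0$ and the monotone (nonincreasing-up-to-integrable-terms) behaviour of $\Expect{v}$ from Step 2, via localization with stopping times.
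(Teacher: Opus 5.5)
Your Steps 1--3 reach \eqref{StochasticWinfController} by a more direct route than the paper. The paper integrates $\Expect{dv}$ only along $(\bm{u}^*,\bm{w}^*)$, identifies the result with the min-max value of \eqref{WinfControlProblem2}, and only then obtains the inequality for arbitrary $\bm{w}$ from Lemma~\ref{TheoremSobolevspace}. You instead use $\Expect{\mathcal{H}(\cdot,\bm{u}^*,\bm{w})}\le\max_{\bm{w}}\Expect{\mathcal{H}(\cdot,\bm{u}^*,\bm{w})}=-v_t$ directly. This gives the dissipation inequality for every admissible $\bm{w}$ without any optimality claim. Your Jensen step also makes explicit that $\|\bm{z}\|^2_{\mathcal{W}_{1,2,\bm{\Gamma}}}$ (with $\partial\bm{z}=\Expect{\bm{f}}$) is only \emph{dominated by} the expected running cost that the HJBI controls; the paper never draws this distinction. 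Modulo localization, this part is sound. The limit also exists: $\Expect{v}+\int_0^t\Expect{\bm{x}'\bm{\Gamma}_0\bm{x}+\bm{f}'\bm{\Gamma}_1\bm{f}-\gamma_{\mathcal{W}}^2\bm{w}'\bm{w}}\,ds$ is nonincreasing and bounded below, and both integrals converge monotonically.

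The gap is Step 4. The claim that along $(\bm{u}^*,\bm{w}^*)$ ``every inequality above becomes an equality'' fails at your own Jensen step. Indeed, $\Expect{\bm{x}'\bm{\Gamma}_0\bm{x}}-\Expect{\bm{x}}'\bm{\Gamma}_0\Expect{\bm{x}}=\mathrm{tr}(\bm{\Gamma}_0\,\mathrm{Cov}(\bm{x}))$, and likewise for $\bm{f}$. This is strictly positive whenever the state is genuinely random, e.g.\ if $\bm{E}_x\neq\bm{0}$ or $\bm{x}(0)$ is random (only $\Expect{\bm{x}(0)}$ is prescribed). At best, the verification argument certifies that $\bm{u}^*$ solves the min-max of $J(\bm{u},\bm{w})=\Expect{\int_0^\infty(\bm{x}'\bm{\Gamma}_0\bm{x}+\bm{f}'\bm{\Gamma}_1\bm{f}-\gamma_{\mathcal{W}}^2\bm{w}'\bm{w})\,dt}$. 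That functional differs from the one in \eqref{WinfControlProblem2} through covariance terms such as $\int_0^\infty[\mathrm{tr}(\bm{\Gamma}_0\mathrm{Cov}(\bm{x}))+\mathrm{tr}(\bm{\Gamma}_1\mathrm{Cov}(\bm{f}))]\,dt$. These terms depend on the control, already for $\bm{u}=\bm{K}\bm{x}$ through the closed-loop covariance dynamics, so optimality for $J$ does not transfer by itself.

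There is a second hole: ``for any other $\bm{u}$ the cost is at least $c_3$'' compares against the wrong constant. Here $c_3$ contains $\lim_{s\to\infty}\Expect{v(\bm{x}(s),s)}$ evaluated along the competitor's trajectory, which you yourself allow to be nonzero. The comparison therefore needs a transversality condition such as $\lim_{s\to\infty}\Expect{v(\bm{x}(s),s)}=0$ on the admissible class.

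The paper's proof glosses over both points: it writes the integrated running cost directly as $\|\bm{z}^*\|^2_{\mathcal{W}_{1,2,\bm{\Gamma}}}$ and equates the trajectory-dependent $c_3$ with the min-max value. So the ``Furthermore'' part needs an argument neither of you gives. One option is to use that the mean obeys the noise-free dynamics $\dot{\bm{z}}=\bm{A}\bm{z}+\bm{B}\Expect{\bm{u}}+\bm{D}\Expect{\bm{w}}$, so that \eqref{WinfControlProblem2} is a deterministic game for $\bm{z}$. You would then show, by a certainty-equivalence argument, that its saddle feedback coincides with the HJBI one. Otherwise, state optimality for $J$ only.

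Finally, the obstacle in your last paragraph is misdiagnosed. Integrate your Step 2 inequality with $\bm{w}=\bm{0}$ and use $v>0$: this gives $\int_0^\infty\Expect{\bm{x}'\bm{\Gamma}_0\bm{x}+\bm{f}'\bm{\Gamma}_1\bm{f}}\,dt\le\Expect{v(\bm{x}(0),0)}$. On the other hand, $\frac{d}{dt}\Expect{\|\bm{x}\|^2}=2\Expect{\bm{x}'\bm{f}}+\mathrm{tr}(\bm{E}_x'\bm{E}_x)$ together with $2|\bm{x}'\bm{f}|\le\|\bm{x}\|^2+\|\bm{f}\|^2$ yields $\Expect{\|\bm{x}(t)\|^2}\ge t\,\mathrm{tr}(\bm{E}_x'\bm{E}_x)-C$, which contradicts that bound. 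Hence for a nonzero constant $\bm{E}_x$ (and $\Expect{v(\bm{x}(0),0)}<\infty$) no positive solution of \eqref{HJBIeq} exists. In that case the lemma holds only vacuously, whatever is done with the limit term in $c_3$.
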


\begin{proof}
%{COMEÇOU COM ESSA EQUAÇÃO SOLTA}
%\begin{align}
%    \Expect{dv} {=} -\bm{x}'\bm{\Gamma}_0\bm{x} - (\bm{f}^*)'\bm{\Gamma}_1\bm{f}^* + \gamma_{\mathcal{W}}^2(\bm{w}^*)'\bm{w}^*
%\end{align}
    By considering $\bm{z}^*$, $\bm{f}^*$, and $\bm{w}^*$ as the respectively min-max optimized values of $\bm{z}(t)$, $\bm{f}(t)$, and $\bm{w}(t)$, the proof can be carried out as follows.
    
     From the HJBI equation \eqref{HJBIeq}, assuming that the solution $v(\bm{x}, t) > 0$ exists, one can obtain
     \vspace{-1mm}
    \begin{align}
    \label{LyapunovDeferential2}
        \Expect{dv} {=} \left(-\bm{x}'\bm{\Gamma}_0\bm{x} - (\bm{f}^*)'\bm{\Gamma}_1\bm{f}^* + \gamma_{\mathcal{W}}^2(\bm{w}^*)'\bm{w}^*\right)dt.
    \end{align}
    Integrating both sides of \eqref{LyapunovDeferential2}, in the time interval $t \in \mathbb{R}_{\geq 0}$, we achieve
    \vspace{-1mm}
        \begin{align}
        -c_3 &=  -||\bm{z}^*||^2_{\mathcal{W}_{1,2,\bm{\Gamma}}} + \gamma_{\mathcal{W}}^2 ||\bm{w}^*||^2_{\mathcal{L}_2}, \nonumber\\
        c_3 &=  \displaystyle \min_{\bm{u} \in \mathcal{U}} \max_{\bm{w} \in \mathcal{W}} \big( ||\bm{z}(t)||^2_{\mathcal{W}_{1,2,\bm{\Gamma}}} - \gamma_{\mathcal{W}}^2 ||\bm{w}(t)||^2_{\mathcal{L}_2} \big). \label{LyapunovDeferential3}
    \end{align}
    Therefore, solving the {HJBI equation} \eqref{HJBIeq} {leads to the solution of the optimal control law} \eqref{WinfControlProblem2}{, which is then computed as}%. The optimal control law is then computed by 
    \begin{align}
        \bm{u}^* &= \displaystyle {\arg}\min_{\bm{u} \in \mathcal{U}} \max_{\bm{w} \in \mathcal{W}} \Expect{\mathcal{H}(v_x,v_{xx},\bm{x},\bm{u},\bm{w},t)}, \nonumber\\ 
        &= \displaystyle \arg\min_{\bm{u} \in \mathcal{U}} \max_{\bm{w} \in \mathcal{W}} \big( ||\bm{z}(t)||^2_{\mathcal{W}_{1,2,\bm{\Gamma}}} - \gamma_{\mathcal{W}}^2 ||\bm{w}(t)||^2_{\mathcal{L}_2} \big). \label{OptimalControl}
    \end{align} 

\vspace{-2mm}
    From the results of Lemma \ref{TheoremSobolevspace}, we can conclude that the closed-loop system \eqref{GenericStochasticSystem}, under the control law \eqref{OptimalControl}, satisfies inequality \eqref{StochasticWinfController}. $\qquad\qquad\qquad\qquad\quad\qquad\qquad\qquad\qquad$\qed%\begin{flushright}\qed\end{flushright}
    
\end{proof}

\vspace{-2mm}
\section{LMI-Based Synthesis of Dynamic Output-Feedback $\mathcal{W}_\infty$ Controllers}
\vspace{-2mm}

%The following theorem provides LMI conditions for synthesizing a linear $\mathcal{W}_\infty$ controller for the stochastic system~\eqref{GenericStochasticSystem}, ensuring mean-square ultimate boundedness. This result constitutes the main contribution of the manuscript.

As the main contribution, the following theorem provides LMI conditions to synthesize a linear $\mathcal{W}_\infty$ controller that ensures mean-square ultimate boundedness for the stochastic system~\eqref{GenericStochasticSystem}.

\begin{theorem}
Consider the stochastic system \eqref{GenericStochasticSystem} in closed-loop with the dynamic-output feedback control law
%\vspace{-1mm}
\begin{align}
\label{u1} 
\mathcal{K}_1: \begin{cases}
    \bm{u} &= \bm{C}_f\bm{x}_f + \bm{D}_f\bm{y}, \\ 
    d\bm{x}_f &= \left(\bm{A}_f\bm{x}_f + \bm{B}_f\bm{y}\right)dt.
\end{cases}
\end{align}
Assume that there exist matrices 
\small$\bm{S}$, $\bm{W}$, $\bm{B}_s$, $\bm{C}_s$, $\bm{D}_f$\normalsize, and \small$\bm{N}$ \normalsize
satisfying the SDP
\begin{gather}
	\label{OptLMIOutputFeedback}
    \min\limits_{\bm{S},\bm{W}, \bm{B}_s, \bm{C}_s, \bm{D}_f} \text{tr}\left\{\bm{N}\right\}, \\ \nonumber
	s.t.: \begin{cases} 
    \begin{bmatrix}
			\bm{N} & \bm{E}'_x \\
			\bm{E}_x & \bm{W}\end{bmatrix} > 0, ~~		\begin{bmatrix}
			\bm{S} & \IdentityMatrix{n_x} \\
			\IdentityMatrix{n_x} & \bm{W}'\end{bmatrix} > 0, \\
		\begin{bmatrix}
			\bm{\Psi}_1 & \bm{*} & \bm{*}\\
			\ZerosMatrix{n_x}{n_x} & \bm{W}\bm{\Psi}_3\bm{W}' & \bm{*} \\
			\bm{\Psi}_4 & \bm{\Psi}_5\bm{W}' & \bm{\Psi}_6
		\end{bmatrix} < 0,
	\end{cases}
\end{gather} 
where $\gamma_{\mathcal{W}} = \sqrt{\gamma^*}$ is a given $\mathcal{W}_\infty$-index, and
\begin{align}
    \bm{\Psi}_1& \triangleq \bm{A}\bm{S} + \bm{B}\bm{C}_s + \left(\bm{A}\bm{S} + \bm{B}\bm{C}_s\right)', \\
    \bm{W}\bm{\Psi}_3\bm{W}' &\triangleq \bm{A}'\bm{W}' + \bm{A}_y'\bm{B}'_s + \bm{W}\bm{A} + \bm{B}_s\bm{A}_y,  \\
\bm{\Psi}'_4 &\triangleq \begin{bmatrix}
	\bm{D} & \bm{S} & (\bm{A}\bm{S} + \bm{B}\bm{C}_s)'
\end{bmatrix}, \\
\bm{W}\bm{\Psi}'_5 &\triangleq \begin{bmatrix}
	\bm{W}{\bm{D}} & \IdentityMatrix{n_x} & \big(\bm{A} + \bm{B}\bm{D}_f\bm{A}_y\big)'
\end{bmatrix},  \\
\bm{\Psi}_6 &\triangleq \begin{bmatrix}
	-\gamma^*\IdentityMatrix{1} & \ZerosMatrix{n_w}{n_x} & \bm{D}' \\
	\ZerosMatrix{n_x}{n_w} & -\bm{\Gamma}^{-1}_0 & \ZerosMatrix{n_x}{n_x} \\
	\bm{D} & \ZerosMatrix{n_x}{n_x} & -\bm{\Gamma}^{-1}_1
\end{bmatrix},
\end{align}
\vspace{-1mm}
such that
\begin{align*}
	\bm{A}_f &{=} \bm{T}(\bm{S}{-}\bm{W}^{-1})^{{-}1}\big( \bm{A}\bm{S} {+} \bm{B}\bm{C}_s {-} \left( \bm{B}\bm{D}_f {-} \bm{W}^{-1}\bm{B}_s \right)\bm{C}\bm{S} \\
	&{+} (\bm{W}^{-1})'(\bm{A}+\bm{B}\bm{D}_f\bm{C})'\big)\bm{T}^{-1} \\
	\bm{B}_f &= \bm{T}(\bm{S}-\bm{W}^{-1})^{-1}\left( \bm{B}\bm{D}_f - \bm{W}^{-1}\bm{B}_s\right), \\
	\bm{C}_f &= \left( \bm{C}_s - \bm{D}_f\bm{C}\bm{S} \right)\bm{T}^{-1}, 
\end{align*}
where $\bm{T}$ is a symmetric and positive definite matrix that can be arbitrarily chosen. 

\vspace{-2mm}
Then the following properties hold:
\begin{enumerate}
\vspace{-1mm}
    \item[1)] The closed-loop {system} trajectories satisfy the stochastic dissipation inequality
    \begin{align}
    \label{dissineq}
    \Expect{dv} \le \left(-\alpha \|\bm{\mathcal{X}}\|^2 
    + \beta \|\bm{w}\|^2 
    + \mathrm{tr}(\bm{N}) \right)dt,
    \end{align}
    for some $\alpha,\beta \in \mathbb{R}_{>0}$, with $\bm{\mathcal{X}} = \begin{bmatrix}
        \bm{x} & \bm{x}_f
    \end{bmatrix}'$.
    \item[2)] There exist constants $c_1,c_2 \in \mathbb{R}_{>0}$ such that the closed-loop system is \emph{mean-square ultimately bounded}, with  
    \[
    \limsup_{t\to\infty}
    \Expect{\|\bm{\mathcal{X}}(t)\|_2^2}
    \le 
    \dfrac{c_1 \|\bm{w}\|^2_{\mathcal{L}_\infty} + \mathrm{tr}(\bm{N})}{c_2}.
    \]
\end{enumerate}
\end{theorem}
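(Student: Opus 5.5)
The plan is the standard dynamic output-feedback linearizing change of variables, Scherer/Gahinet style, combined with a quadratic stochastic Lyapunov function $v(\bm{\mathcal{X}})=\bm{\mathcal{X}}'\bm{P}\bm{\mathcal{X}}$ for the augmented state $\bm{\mathcal{X}}=[\bm{x}'\;\bm{x}_f']'$.

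\textbf{Closed loop.} Substituting $\mathcal{K}_1$ into \eqref{GenericStochasticSystem} gives
\[
d\bm{\mathcal{X}}=(\bm{A}_{cl}\bm{\mathcal{X}}+\bm{D}_{cl}\bm{w})dt+\bm{E}_{cl}\,d\bm{\mathfrak{m}} ,
\]
with
\[
\bm{A}_{cl}=\begin{bmatrix}\bm{A}+\bm{B}\bm{D}_f\bm{A}_y & \bm{B}\bm{C}_f\\ \bm{B}_f\bm{A}_y & \bm{A}_f\end{bmatrix},\qquad \bm{D}_{cl}=\begin{bmatrix}\bm{D}\\ \bm{0}\end{bmatrix}.
\]
The diffusion $\bm{E}_{cl}$ collects $\bm{E}_x$ together with the measurement-noise channels through $\bm{E}_y$. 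I will treat the latter like $\bm{E}_x$, or absorb them; the LMI as stated only involves $\bm{E}_x$, so I will argue that this is the dominant term in the trace bound.

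\textbf{Choice of $\bm{P}$ and congruence.} Take
\[
\bm{P}=\begin{bmatrix}\bm{W} & \bm{M}\\ \bm{M}' & \star\end{bmatrix},\qquad \bm{P}^{-1}=\begin{bmatrix}\bm{S} & \bm{Q}\\ \bm{Q}' & \star\end{bmatrix},
\]
with $\bm{W}\bm{S}+\bm{M}\bm{Q}'=\bm{I}$. I will choose $\bm{Q}=\bm{T}$, so that $\bm{M}=(\bm{I}-\bm{W}\bm{S})\bm{T}^{-1}$, and set
\[
\bm{\Pi}_1=\begin{bmatrix}\bm{S} & \bm{I}\\ \bm{T}' & \bm{0}\end{bmatrix},\qquad \bm{\Pi}_2=\bm{P}\bm{\Pi}_1=\begin{bmatrix}\bm{I} & \bm{W}\\ \bm{0} & \bm{M}'\end{bmatrix}.
\]
The second LMI, $\left[\begin{smallmatrix}\bm{S}&\bm{I}\\\bm{I}&\bm{W}\end{smallmatrix}\right]>0$, is exactly $\bm{\Pi}_1'\bm{P}\bm{\Pi}_1>0$. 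It gives $\bm{S}-\bm{W}^{-1}>0$, hence $\bm{P}>0$ and invertibility of $\bm{M}$.

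\textbf{Itô generator.} By Itô's formula,
\[
\Expect{dv}=\Expect{2\bm{\mathcal{X}}'\bm{P}(\bm{A}_{cl}\bm{\mathcal{X}}+\bm{D}_{cl}\bm{w})+\mathrm{tr}(\bm{E}_{cl}'\bm{P}\bm{E}_{cl})}\,dt .
\]
I add and subtract the $\mathcal{W}_\infty$ supply rate from Lemma~\ref{TheoremWinf},
\[
\bm{x}'\bm{\Gamma}_0\bm{x}+\bm{f}'\bm{\Gamma}_1\bm{f}-\gamma^*\bm{w}'\bm{w},\qquad \bm{f}=[\bm{A}+\bm{B}\bm{D}_f\bm{A}_y\;\;\bm{B}\bm{C}_f]\bm{\mathcal{X}}+\bm{D}\bm{w}\ \text{(the drift, by Lemma~\ref{TheoremWeakDerivative})}.
\]
The deterministic part is then a quadratic form in $(\bm{\mathcal{X}},\bm{w})$. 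Negativity of that form reduces, by Schur complements in $\bm{\Gamma}_0$ and $\bm{\Gamma}_1$, to a block matrix inequality. Applying the congruence $\mathrm{diag}(\bm{\Pi}_1,\bm{I})$ and the change of variables $\bm{C}_s=\bm{D}_f\bm{A}_y\bm{S}+\bm{C}_f\bm{T}'$, $\bm{B}_s$, and the $\bm{A}_f$-combination (inverting the stated formulas for $\bm{A}_f,\bm{B}_f,\bm{C}_f$) should reproduce the third LMI with blocks $\bm{\Psi}_1,\dots,\bm{\Psi}_6$. This verification is the main obstacle. The zero $(2,1)$ block and the appearance of $\bm{W}\bm{\Psi}_3\bm{W}'$ suggest the $\bm{A}_f$ choice is made to cancel the cross term. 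I expect to check this carefully, including the identification $\bm{C}=\bm{A}_y$, and possibly to need an extra congruence by $\mathrm{diag}(\bm{I},\bm{W}^{-1})$.

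\textbf{Properties 1) and 2).} Strictness of the third LMI yields the bound
\[
\bm{\mathcal{X}}'(\bm{P}\bm{A}_{cl}+\bm{A}_{cl}'\bm{P})\bm{\mathcal{X}}+2\bm{\mathcal{X}}'\bm{P}\bm{D}_{cl}\bm{w}\le-\alpha\|\bm{\mathcal{X}}\|^2+\beta\|\bm{w}\|^2 .
\]
Here $\alpha>0$ is taken from the smallest eigenvalue margin plus $\lambda_{\min}(\bm{\Gamma}_0)$, and $\beta=\gamma^*$. For the trace term, the first LMI gives $\bm{N}>\bm{E}_x'\bm{W}^{-1}\bm{E}_x$ by Schur complement. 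Using the congruence structure (the $(1,1)$ block of $\bm{\Pi}_1'\bm{P}\bm{\Pi}_1$ relation), I then bound $\mathrm{tr}(\bm{E}_{cl}'\bm{P}\bm{E}_{cl})\le\mathrm{tr}(\bm{N})$. Together these give \eqref{dissineq}.

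For 2), write $\lambda_{\min}(\bm{P})\|\bm{\mathcal{X}}\|^2\le v\le\lambda_{\max}(\bm{P})\|\bm{\mathcal{X}}\|^2$. Then
\[
\tfrac{d}{dt}\Expect{v}\le-\tfrac{\alpha}{\lambda_{\max}}\Expect{v}+\beta\|\bm{w}\|_{\mathcal{L}_\infty}^2+\mathrm{tr}(\bm{N}) .
\]
A comparison lemma (Grönwall) yields
\[
\limsup\Expect{v}\le\lambda_{\max}(\beta\|\bm{w}\|_{\mathcal{L}_\infty}^2+\mathrm{tr}\bm{N})/\alpha ,
\]
hence the claim with $c_1=\beta$ and $c_2=\alpha\lambda_{\min}/\lambda_{\max}$. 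Minimizing $\mathrm{tr}(\bm{N})$ then minimizes this ultimate bound.
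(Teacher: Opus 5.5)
\textbf{The gap is the trace bound in 1).} In your parametrization $\bm{P}_{11}=\bm{W}$, so $\bm{E}_{cl}'\bm{P}\bm{E}_{cl}=\bm{E}_x'\bm{W}\bm{E}_x$. The Schur complement of the first LMI, however, only controls $\bm{E}_x'\bm{W}^{-1}\bm{E}_x$. Appealing to the $(1,1)$ block of $\bm{\Pi}_1'\bm{P}\bm{\Pi}_1$ does not help, because that block is $\bm{S}$. The constraint $\left[\begin{smallmatrix}\bm{S}&\bm{I}\\ \bm{I}&\bm{W}\end{smallmatrix}\right]>0$ gives only $\bm{S}>\bm{W}^{-1}$ (equivalently $\bm{W}>\bm{S}^{-1}$), which is a lower bound on $\bm{W}$, never an upper one. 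In the scalar case you need $e^2w\le N$ but only know $N>e^2/w$. Near the optimum this fails whenever $w>1$, and minimizing $\mathrm{tr}(\bm{N})$ pushes $w$ up.

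You cannot borrow this step from the paper either. Its LDU computation writes $\bm{P}_{11}=(\bm{S}-\bm{T}\bm{R}^{-1}\bm{T}')^{-1}$ as ``$\bm{W}^{-1}$''. That contradicts $\bm{W}\triangleq(\bm{S}-\bm{T}\bm{R}^{-1}\bm{T}')^{-1}$, the definition on which the other two LMIs and the controller formulas rest. The consistent constraint is $\bm{N}-\bm{E}_x'\bm{W}\bm{E}_x>0$, which is already linear in $(\bm{N},\bm{W})$. For the statement exactly as written ($v$ unspecified, $\alpha,\beta$ free), you could instead take $v=c\,\bm{\mathcal{X}}'\bm{P}\bm{\mathcal{X}}$ with $c=\min\{1,\mathrm{tr}(\bm{N})/\mathrm{tr}(\bm{E}_x'\bm{W}\bm{E}_x)\}$. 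That proves 1), but $\mathrm{tr}(\bm{N})$ then no longer bounds the noise term certified by $\bm{P}$.

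Apart from this, your route to the second and third LMIs is the paper's. Your $\bm{\Pi}_1=\left[\begin{smallmatrix}\bm{S}&\bm{I}\\ \bm{T}'&\bm{0}\end{smallmatrix}\right]$ is exactly the paper's $\bm{P}^{-1}\bm{M}'$ followed by the congruence with $\mathrm{diag}(\bm{I},\bm{W})$. Choosing $\bm{A}_f$ to annihilate the $(2,1)$ block (the paper's $\bm{A}_m=\bm{\Psi}_2$) reproduces $\bm{W}\bm{\Psi}_3\bm{W}'$ and the stated $\bm{A}_f$ formula, with no extra congruence needed.

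\textbf{Two smaller issues.} First, keeping the $\bm{E}_y$ channels adds $\mathrm{tr}(\bm{E}_y'\bm{G}'\bm{P}\bm{G}\bm{E}_y)\ge 0$, with $\bm{G}=[(\bm{B}\bm{D}_f)'\;\bm{B}_f']'$, and no constraint controls this term. So ``$\bm{E}_x$ is the dominant term'' is not an argument. Instead, adopt the paper's convention $\bm{E}_c=[\bm{E}_x'\;\bm{0}]'$: in its formalism $\bm{E}_y\,d\delta$ enters the closed loop only multiplied by $dt$, and $d\delta\,dt=0$ removes it. Second, $\bm{\Gamma}_0$ weights only $\bm{x}$, not $\bm{x}_f$, so take $\alpha$ from the strictness margin alone.

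\textbf{Part 2) is fine and better than the paper's argument.} Your Gr\"onwall/comparison route is correct and takes a different path from the paper. The paper divides the integrated inequality by $t$ and then uses $\limsup_t\Expect{\|\bm{\mathcal{X}}(t)\|^2}\le\limsup_t\frac{1}{t}\int_0^t\Expect{\|\bm{\mathcal{X}}(s)\|^2}ds$. That inequality is false in general; only the reverse direction holds. Your comparison lemma gives the pointwise $\limsup$ bound directly, with $c_1=\beta$ and $c_2=\alpha\lambda_{\min}(\bm{P})/\lambda_{\max}(\bm{P})$.
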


\begin{proof}
The closed-loop system obtained by applying the dynamic output-feedback controller \eqref{u1} to \eqref{GenericStochasticSystem}, denoted by $\mathcal{P}_2(\mathcal{K}_1)$, can be written as
\begin{align}
\nonumber d\bm{\mathcal{X}} &{=} \Bigg(\underbrace{\begin{bmatrix}
    \bm{A}+\bm{B}\bm{D}_f\bm{A}_y & \bm{B}\bm{C}_f \\
    \bm{B}_f\bm{A}_y & \bm{A}_f
\end{bmatrix}}_{\bm{A}_c} \bm{\mathcal{X}} {+} \underbrace{\begin{bmatrix}
    \bm{D} \\
    \bm{0}
\end{bmatrix}}_{\bm{D}_c}\bm{w}\Bigg)dt {+} \underbrace{\begin{bmatrix}
    \bm{E}_x \\
    \bm{0}
\end{bmatrix}}_{\bm{E}_c}dm, \\
    &=  \left(\bm{A}_c\bm{\mathcal{X}} {+} \bm{D}_c\bm{w}\right)dt {+} \bm{E}_c dm. \label{Closedloop}
\end{align}

\vspace{-2mm}
To enforce the $\mathcal{W}_\infty$ performance criterion, we invoke Lemma~\ref{TheoremWinf} and write the corresponding Hamiltonian inequality
\vspace{-1mm}
\begin{align}
\label{Hamiltonian2}
		\Expect{dv} + \left(\bm{x}'\bm{\Gamma}_0\bm{x} + \bm{f}'\bm{\Gamma}_1\bm{f} {-} \gamma_{\mathcal{W}}^2\bm{w}'\bm{w}\right)dt \leq 0,
\end{align}
with the quadratic Lyapunov candidate
\vspace{-1mm}
\begin{align}
    v &= \frac{1}{2}\bm{\mathcal{X}}'\bm{P}\bm{\mathcal{X}} > 0.  \label{Lyap} 
\end{align}
Since $v$ has no explicit time dependence, we have $v_t = 0$, $v_{\bm{\mathcal{X}}} = \bm{P}\bm{\mathcal{X}}$, $v_{\bm{\mathcal{X}}\bm{\mathcal{X}}} =  \dfrac{1}{2}\bm{P}.$
Using the stochastic calculus identities $dtdt = dtdm = dmdt = 0$ and $dmdm = dt$ \citep{shreve2004stochastic}, it follows that
\vspace{-1mm}
\begin{align}
\Expect{dv} &= v'_{\bm{\mathcal{X}}}d\bm{\mathcal{X}} {+} \dfrac{1}{2}d\bm{\mathcal{X}}' v_{\bm{\mathcal{X}}\bm{\mathcal{X}}}d\bm{\mathcal{X}}, \\
    &= \bm{\mathcal{X}}'\bm{P}\left(\bm{A}_c\bm{\mathcal{X}} {+} \bm{D}_c\bm{w}\right)dt + \text{tr}\left\{\bm{E}'_c\bm{P}\bm{E}_c\right\}dt. \label{dvlmi1}
\end{align}

\vspace{-2mm}
The second term \GR{in} \eqref{dvlmi1} corresponds to the mean ultimate bound induced by $\bm{E}_c$ \GR{and is thus minimized}. \GR{Specifically, the cost functional minimizes $\mathrm{tr}(\bm{N})$} as an upper bound for $\mathrm{tr}(\bm{E}_c' \bm{P} \bm{E}_c)$.

Therefore, combining \eqref{Closedloop}, \eqref{Hamiltonian2}, and \eqref{Lyap}, the control design problem becomes
\begin{align}
 &\min\limits_{\gamma, \bm{P}, \bm{A}_f, \bm{B}_f, \bm{C}_f, \bm{D}_f} \text{tr}\left\{\bm{N}\right\},  \label{Opc1} \\
 s.t.{:}& \begin{cases} 
 \bm{N} - \bm{E}'_c\bm{P}\bm{E}_c > 0, ~~ \bm{P} > 0 \\
\bm{\mathcal{X}}'\bm{P}\left(\bm{A}_c\bm{\mathcal{X}} {+} \bm{D}_c\bm{w}\right) {+} \Big(\bm{x}'\bm{\Gamma}_0\bm{x} {+} \bm{f}'\bm{\Gamma}_1\bm{f}  - \gamma^*\bm{w}'\bm{w}\Big) {<} 0, 
    \end{cases} \nonumber
\end{align}
where $\bm{f} = \bm{U} \bm{\mathcal{X}} + \bm{D}\bm{w}$, with $\bm{U} \triangleq \begin{bmatrix} \bm{A} + \bm{B} \bm{D}_f \bm{A}_y & \bm{B} \bm{C}_f \end{bmatrix}$.

\vspace{-1mm}
To express the constraints in \eqref{Opc1} as an SDP, we proceed as follows.
First, group the terms in the last inequality of \eqref{Opc1} to obtain
\begin{gather}
	\label{LMI21}
    \begin{bmatrix}
        \bm{\mathcal{X}} \\
        \bm{w}
    \end{bmatrix}'\!\!\begin{bmatrix}
		\bm{A}'_c\bm{P} {+} \bm{P}\bm{A}_c {+} \bm{F} {+} \bm{U}'\bm{\Gamma}_1\bm{U} & \bm{*} \\
		\bm{D}'_c\bm{P} {+} \bm{D}'\bm{\Gamma}_1\bm{U} & \bm{D}'\bm{\Gamma}_1\bm{D}{-}\gamma^*\IdentityMatrix{1}
	\end{bmatrix}\!\!\begin{bmatrix}
        \bm{\mathcal{X}} \\
        \bm{w}
    \end{bmatrix} < 0,
\end{gather}\normalsize
where $\bm{F} \triangleq \begin{bmatrix}
	\bm{\Gamma}_0 & \ZerosMatrix{n_x}{n_x} \\ \ZerosMatrix{n_x}{n_x} & \ZerosMatrix{n_x}{n_x}
\end{bmatrix}$\normalsize.

Next, apply the similarity transformation \citep{chen1984linear} $\mathcal{X} = \bm{P}^{-1}\bm{M}'\mathcal{Y}$, where
\vspace{-2mm}
\begin{gather}
\label{MatrixP}
    \bm{P}^{-1} \triangleq  \begin{bmatrix}
	\bm{S}  & \bm{T} \\ \bm{T}' & \bm{R}
\end{bmatrix}
\end{gather}
and $\bm{M} \triangleq \begin{bmatrix}
	\IdentityMatrix{n_x} & \ZerosMatrix{n_x}{n_x}\\
	\IdentityMatrix{n_x} & -\bm{T}\bm{R}^{-1}
\end{bmatrix}$\normalsize,
which transforms to the second inequality in \eqref{Opc1} into
\vspace{-1mm}
\normalsize\begin{gather}
	\label{LMI23}
	\mathcal{Y}'_a \bm{M}\bm{P}^{-1}\bm{M}'\mathcal{Y}_a = \mathcal{Y}'_a\begin{bmatrix}
		\bm{S} & \bm{S} - \bm{Q} \\
		\bm{S} - \bm{Q} & \bm{S} - \bm{Q}
	\end{bmatrix}\mathcal{Y}_a > 0.
\end{gather}
Applying the same transformation to inequality \eqref{LMI21} yields
\normalsize\begin{gather}
	\label{LMI24}
    \begin{bmatrix}
		\!\bm{M}\bm{P}^{-1}\!\!\left(\bm{A}'_a\bm{P} {+} \bm{P}\bm{A}_a {+} \bm{F} {+} \bm{U}'\bm{\Gamma}_1\bm{U}\right)\!\!\bm{P}^{-1}\bm{M}' & \!\!\!\!\!\!\!\!\bm{*} \\
		\left(\bm{D}'_a\bm{P} {+} \bm{D}'\bm{\Gamma}_1\bm{U}\right)\bm{P}^{-1}\bm{M}' & \!\!\!\!\!\!\!\!\bm{D}'\bm{\Gamma}_1\bm{D}{-}\gamma^*\IdentityMatrix{1}
	\end{bmatrix} \!\!{<} 0,
\end{gather}\normalsize
where $\bm{Q} \triangleq \bm{T}\bm{R}^{-1}\bm{T}'$.

\vspace{-1mm}
Applying the Schur complement to \eqref{LMI24} and defining $\bm{L} \triangleq \begin{bmatrix} \bm{S}' & \bm{S}' - \bm{Q} \end{bmatrix}'$, we obtain
\normalsize\begin{gather}
	\label{LMI25}
\begin{bmatrix}
		\bm{M}\bm{P}^{-1}\left(\bm{A}'_a\bm{P} {+} \bm{P}\bm{A}_a\right)\bm{P}^{-1}\bm{M}' & \!\!\bm{*} & \bm{*} & \bm{*}\\
		\bm{D}'_a\bm{M}' & \!\!{-}\gamma^*\IdentityMatrix{1} & \bm{*} & \bm{*} \\
		\bm{L}' & \!\!\bm{0} & -\bm{\Gamma}^{{-}1}_0 & \bm{*} \\
		\bm{U}\bm{P}^{-1}\bm{M}' & \!\!\bm{D} & \bm{0} & -\bm{\Gamma}^{{-}1}_1
	\end{bmatrix} < 0.
\end{gather}\normalsize

\vspace{-1mm}
Then, \eqref{LMI25} is rewritten in the compact block form
\begin{gather}
	\label{LMI26}
	\begin{bmatrix}
		\bm{\Psi}_1 & \bm{*} & \bm{*}\\
		\bm{\Psi}_2 - \bm{A}_m & \bm{\Psi}_3 & \bm{*} \\
		\bm{\Psi}_4 & \bm{\Psi}_5 & \bm{\Psi}_6
	\end{bmatrix} < 0,
\end{gather} 
where
\small$\begin{bmatrix}
	\bm{\Psi}_1 & \bm{*} \\
	\bm{\Psi}_2 - \bm{A}_m & \bm{\Psi}_3
\end{bmatrix} \triangleq \bm{M}\bm{P}^{{-}1}\left(\bm{A}'_a\bm{P} + \bm{P}\bm{A}_a\right)\bm{P}^{{-}1}\bm{M}'$\normalsize, with \small$\bm{\Psi}_1 \triangleq \bm{A}\bm{S} + \bm{B}(\bm{D}_f \bm{C}\bm{S} + \bm{C}_f \bm{T}') + \left(\bm{A}\bm{S} + \bm{B}(\bm{D}_f\bm{C}\bm{S} + \bm{C}_f\bm{T})\right)'$\normalsize, \small$\bm{\Psi}_2\triangleq (\bm{S}-\bm{Q})'(\bm{A}+\bm{B}\bm{D}_f \bm{C})' + (\bm{A}+\bm{B}\bm{D}_f \bm{C})\bm{S}'+\bm{B}\bm{C}_f\bm{T}' - \bm{T}\bm{R}^{-1}\bm{B}_f\bm{C}\bm{S}'$\normalsize, \small$\bm{A}_m \triangleq \bm{T}\bm{R}^{-1}\bm{A}_f\bm{T}'$\normalsize, \small$\bm{\Psi}_3 = (\bm{S} - \bm{Q})\bm{A}' + \bm{A}(\bm{S} - \bm{Q})' + (\bm{S} - \bm{Q})\bm{C}'(\bm{D}_f\bm{B}' - \bm{B}'_f\bm{R}^{-1}\bm{T}') + (\bm{B}\bm{D}_f - \bm{T}\bm{R}^{-1}\bm{B}_f)\bm{C}(\bm{S} - \bm{Q})'$\normalsize,
\small$\begin{bmatrix}
	\bm{\Psi}_4 &
	\bm{\Psi}_5
\end{bmatrix} \triangleq \begin{bmatrix}
	\bm{D}'_a\bm{M}' \\ \bm{L}' \\ \bm{U}\bm{P}^{-1}\bm{M}'
\end{bmatrix}$\normalsize, 
\small$\bm{\Psi}_6 \triangleq \begin{bmatrix}
	-\gamma^2\IdentityMatrix{1} & \bm{*} & \bm{*} \\
	\ZerosMatrix{n_x}{n_w} & -\bm{\Gamma}^{{-}1}_0 & \bm{*} \\
	\bm{D} & \ZerosMatrix{n_x}{n_x} & -\bm{\Gamma}^{{-}1}_1
\end{bmatrix}$\normalsize.

Setting $\bm{A}_m = \bm{\Psi}_2$, introducing the change of variables $\bm{C}_s \triangleq \bm{D}_f\bm{C}\bm{S} + \bm{C}_f\bm{T}$, $\bm{B}_s \triangleq \bm{W}(\bm{B}\bm{D}_f-\bm{T}'\bm{R}^{-1}\bm{B}_f)$, with $\bm{W} \triangleq \left(\bm{S} - \bm{Q} \right)^{-1}$, and applying the following congruent transformations to the inequalities \eqref{LMI23} and \eqref{LMI26},
\begin{gather}
	\begin{bmatrix}
		\IdentityMatrix{0}  & \ZerosMatrix{1}{1} \\ \ZerosMatrix{1}{1} & \bm{W}
	\end{bmatrix}
	\begin{bmatrix}
		\bm{S} & \bm{S} - \bm{Q} \\
		\bm{S} - \bm{Q} & \bm{S} - \bm{Q}
	\end{bmatrix}
	\begin{bmatrix}
		\IdentityMatrix{0}  & \ZerosMatrix{1}{1} \\ \ZerosMatrix{1}{1} & \bm{W}'
	\end{bmatrix} > 0, \\
	\begin{bmatrix}
	    \IdentityMatrix{0}  & \ZerosMatrix{1}{1}  & \ZerosMatrix{1}{1} \\
		\ZerosMatrix{1}{1}  & \bm{W} & \ZerosMatrix{1}{1} \\
		\ZerosMatrix{1}{1}  & \ZerosMatrix{1}{1} & \IdentityMatrix{0}
	\end{bmatrix}\begin{bmatrix}
		\bm{\Psi}_1 & \bm{*} & \bm{*}\\
		\ZerosMatrix{1}{1} & \bm{\Psi}_3 & \bm{*} \\
		\bm{\Psi}_4 & \bm{\Psi}_5 & \bm{\Psi}_6
	\end{bmatrix}\begin{bmatrix}
	\IdentityMatrix{0}  & \ZerosMatrix{1}{1}  & \ZerosMatrix{1}{1} \\ \ZerosMatrix{1}{1} & \bm{W}' & \ZerosMatrix{1}{1} \\
	\ZerosMatrix{1}{1} & \ZerosMatrix{1}{1} & \IdentityMatrix{0}
\end{bmatrix} < 0,
\end{gather} 
yield the second and third inequalities in \eqref{OptLMIOutputFeedback}, which are linear in $\bm{S}$, $\bm{W}$, $\bm{B}_s$, $\bm{C}_s$, and $\bm{D}_f$.

To derive the first inequality in \eqref{OptLMIOutputFeedback}, consider the LDU factorization of \eqref{MatrixP}:
\begin{gather}
    \bm{P}^{-1} {=} \underbrace{\begin{bmatrix}
        \IdentityMatrix{} & \bm{T}\bm{R}^{-1} \\
        \ZerosMatrix{}{} & \IdentityMatrix{}
    \end{bmatrix}}_{\bm{L}}\! \underbrace{\begin{bmatrix}
        \bm{S}-\bm{T}\bm{R}^{-1}\bm{T} & \ZerosMatrix{}{} \\
        \ZerosMatrix{}{} & \bm{R}
    \end{bmatrix}}_{\bm{D}} \!\underbrace{\begin{bmatrix}
        \IdentityMatrix{} &  \ZerosMatrix{}{}  \\
       -\bm{R}^{-1}\bm{T}^{T}& \IdentityMatrix{}
    \end{bmatrix}}_{\bm{U}}.
\end{gather}
From this factorization, we obtain\vspace{-1mm}
\small\begin{align*}
    \bm{P} &{=} \bm{U}^{-1}\bm{L}^{-1}\bm{D}^{-1} {=} \begin{bmatrix}
\bm{W} ^{-1} & \!\!\!\!-\bm{W}^{-1} \bm{T} \bm{R}^{-1} \\
- \bm{R}^{-1} \bm{T}' \bm{W}^{-1} &~~ \!\!\!\!\bm{R}^{-1} \! +\! \bm{R}^{-1} \bm{T}' \bm{W}^{-1} \bm{T} \bm{R}^{-1}
\end{bmatrix},\vspace{-2mm}
\end{align*}\normalsize
where  $\bm{W}^{-1} = \big(\bm{S} - \bm{T} \bm{R}^{-1} \bm{T}'\big)^{-1} = \left(\bm{S} - \bm{Q}\right)^{-1}$. 
The second term in the first inequality of \eqref{Opc1} can be expanded,
%\begin{gather}
    $\bm{E}'_c\bm{P}\bm{E}_c = \begin{bmatrix}
    \bm{E}_x \\
    \bm{0}
\end{bmatrix}'\bm{P}\begin{bmatrix}
    \bm{E}_x \\
    \bm{0}
\end{bmatrix} = \bm{E}'_x \bm{W}^{-1} \bm{E}_x$,
%\end{gather}
Thus, the first constraint in \eqref{Opc1} becomes
%\begin{gather}
%\label{IneqTrace1}
 $\bm{N} - \bm{E}'_x \bm{W}^{-1} \bm{E}_x > 0$,
%\end{gather}
which, by applying the Schur complement, leads directly to the first inequality in \eqref{OptLMIOutputFeedback}. 

To complete the proof, we must show that feasibility of 
\eqref{OptLMIOutputFeedback} guarantees \emph{stochastic dissipation} 
and \emph{mean-square ultimate boundedness} of the closed-loop system.

\noindent1) Stochastic dissipation:
Using \eqref{dvlmi1} and the first inequality in \eqref{Opc1}, we obtain
   $\mathrm{tr}(\bm{E}_c' P \bm{E}_c)
   \le \mathrm{tr}(\bm{N})$.
Substituting this bound into the Hamiltonian inequality 
\eqref{Hamiltonian2} yields
\begin{align}
\Expect{dv} 
{\leq} \big( 
{-}\bm{x}' \Gamma_0 \bm{x}
{-} \bm{f}' \Gamma_1 \bm{f}
{+} \gamma_{\mathcal{W}}^2 \bm{w}' \bm{w}
{+} \mathrm{tr}(\bm{N})\,\big)dt .
\end{align}
This is a stochastic dissipation inequality of the form
\begin{gather}
\label{DissipationInequality}
\Expect{dv} \le -\alpha \Expect{\|\bm{\mathcal{X}}\|_2^2} dt 
+ \beta \|\bm{w}\|_2^2 dt 
+ \mathrm{tr}(\bm{N}) dt,
\end{gather}
for some $\alpha,\beta \in \mathbb{R}_{\geq 0}$ implied by the feasibility of 
\eqref{OptLMIOutputFeedback}.

\noindent 2) Mean-square ultimate boundedness:
Integrating \eqref{DissipationInequality} from $0$ to $t$ gives
\begin{align}\nonumber
\Expect{v(\mathcal{X}(t))} &- \Expect{v(\mathcal{X}(0))}
\le 
-\alpha \int_0^t \Expect{\|\mathcal{X}(s)\|_2^2}ds
\\ \label{eq:preBound}
&\quad
+ \beta\int_0^t \|w(s)\|_2^2\,ds
+ t\,\mathrm{tr}(N).
\end{align}
Because $v(\mathcal{X}) = \mathcal{X}' P\mathcal{X}$ and $P\succ 0$, we have
%\begin{gather}
$\Expect{v(\mathcal{X}(t))} \geq \lambda_{\min}(P)\,\Expect{\|\mathcal{X}(t)\|_2^2},
\qquad
\forall \mathcal{X}\in\mathbb{R}^n$. %\label{IneqLyap1}
%\end{gather}
%\eqref{IneqLyap1}
Using that equation and dividing both sides of \eqref{eq:preBound} by $t$, it leads
\begin{align}
\label{eq:timeAvg2}
\!\!\!\alpha\,
\frac{1}{t}\!\!\int_0^t \!\!\!\Expect{\|\mathcal{X}(s)\|_2^2}\,ds
&{\le}
\dfrac{v(\mathcal{X}(0))}{t} {+} \beta\,\frac{1}{t}\int_0^t \!\!\!\|w(s)\|_2^2\,ds
{+} \mathrm{tr}(N). 
\end{align}
Taking $\limsup_{t\to\infty}$ to both sides of \eqref{eq:timeAvg2}, we obtain
$\limsup_{t\to\infty} 
\frac{1}{t}\int_0^t \|w(s)\|_2^2\,ds
\le \|w\|_{\mathcal{L}_\infty}^2,$
and using that 
$\limsup_{t\to\infty} v(\mathcal{X}(0))/t = 0$ {yields}
$\alpha\,
\limsup_{t\to\infty}
\frac{1}{t}\int_0^t \mathbb{E}\|\mathcal{X}(s)\|_2^2\,ds
\le
\beta\,\|w\|_{\mathcal{L}_\infty}^2
+ \mathrm{tr}(N)$.
Since $\displaystyle\limsup_{t\to\infty}\mathbb{E}\|\mathcal{X}(t)\|_2^2
\le 
\limsup_{t\to\infty}
\frac{1}{t}\int_0^t \mathbb{E}\|\mathcal{X}(s)\|_2^2\,ds$,
it follows
\begin{gather}
\label{UltimateBound}
    \limsup_{t\to\infty} \mathbb{E}\|\mathcal{X}(t)\|_2^2
\le 
\frac{\beta\|w\|_{\mathcal{L}_\infty}^2 + \mathrm{tr}(N)}
{\alpha\,\lambda_{\min}(P)}.
\end{gather}
Thus, the closed-loop system is \emph{mean-square ultimately bounded}. Minimizing $\mathrm{tr}(\bm{N})$ minimizes this upper bound, proving the theorem. $\qquad\qquad\qquad\qquad\quad\qquad\qquad\quad$\qed                  
\end{proof}

\begin{remark}
\label{MatrixT}
	Matrix $\bm{T}$ defines a similarity transformation over the state of the dynamic controller, $\bm{x}_f${; thus, it} does not affect the closed-loop transfer function. The choice $\bm{T} = (\bm{S}-\bm{W}^{-1})$ is suitable {for mitigating} numerical issues.
\end{remark}

\vspace{-2mm}
\section{Illustrative numerical example}
\label{Results}
\vspace{-2mm}

To illustrate the proposed approach, we consider the F-16 fighter aircraft \GR{pitch-axis longitudinal dynamics} \citep{rajpurohit2017nonlinear}, modeled \GR{via} \eqref{GenericStochasticSystem} with
\small
 \begin{align*}
    \bm{A} &{=} \begin{bmatrix}
0 & 1 & 0 \\
0 & -0.87 & 43.22 \\
0 & 0.99 & -1.34 
\end{bmatrix}\!, ~
 \bm{B} {=}
\begin{bmatrix}
0 & 0 \\
-17.25 & -1.58 \\
-0.17 & -0.25
\end{bmatrix}\!, ~\bm{D} {=}\begin{bmatrix}
0 & 0.01 & 0 \\
0 & 0 & 0.1
\end{bmatrix}^T\!,  \\
\bm{A}_y &{=}\begin{bmatrix}
1 & 0 & 0 \\
0 & 1 & 0
\end{bmatrix},  ~~~
 \bm{E}_x {=} \text{diag}(0.001, 0.01, 0.05),   ~~~
 \bm{E}_y {=} \text{diag}(0.1, 0.1),
\end{align*}
\normalsize
with \GR{state} $\bm{x} = [{x}_1 \;\; {x}_2 \;\; {x}_3]^T$ \GR{(pitch angle, rate, and angle of attack)} and \GR{input} $\bm{u} = [{u}_1 \;\; {u}_2]^T$ \GR{(elevator and flaperon deflections)}. \GR{The} model \GR{represents} nominal flight at 3,000 feet and Mach 0.6 \GR{under} stochastic disturbances. \GR{Controllers were synthesized} by solving the SDPs in \eqref{OptLMIOutputFeedback} using YALMIP \citep{lofberg2004yalmip} and MOSEK \GR{in MATLAB}. \GR{Parameters} \GR{tuned to} $\bm{\Gamma}_0 = \text{diag}\big(1,1,1\big)$ and $\bm{\Gamma}_1 = \text{diag}\big(0.001, 0.001, 0.001\big)$. \GR{Applying the} control law \eqref{u1} \GR{to the system} \eqref{GenericStochasticSystem} \GR{with} initial conditions \GR{$\bm{x}_0 = [\pi/12, 0, \pi/12]^T$,} the resulting state trajectories, control inputs, and disturbances  are shown in Fig.~\ref{Results1}.

\vspace{-2mm}
\begin{figure}[htb!]
	\centering
	\def\svgwidth{0.9\columnwidth}{
			\scriptsize{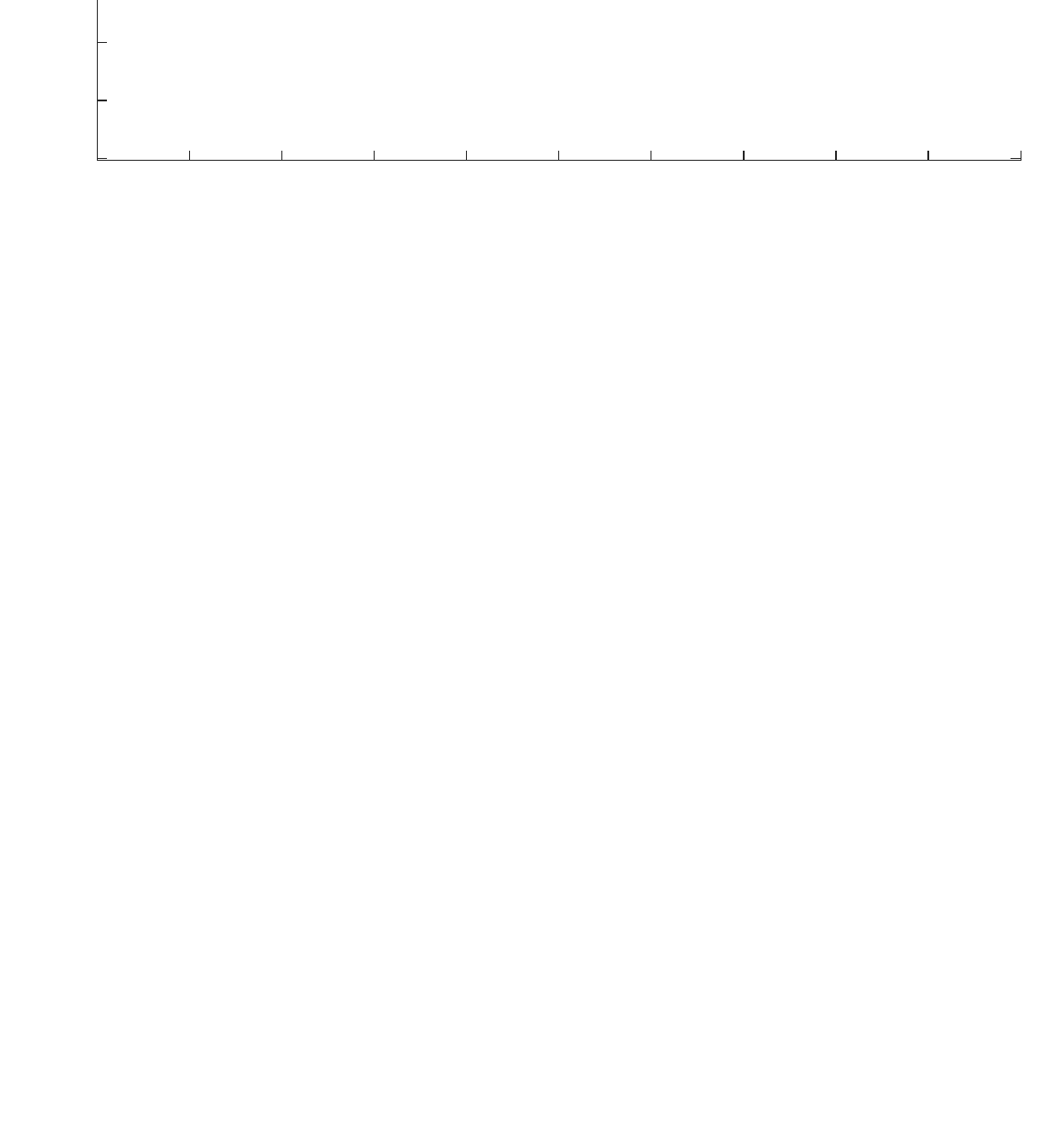}}
    \protect
    \caption{Time evolution of the pitch angle, pitch rate, and angle of attack (AoA), along with the control inputs $u_1$ and $u_2$ and the disturbance signal $w$, for two values of the $\mathcal{W}_\infty$-index, $\gamma^{*} \in \{1,\,0.005\}$.}

	\label{Results1}
\end{figure}

%As shown in the figure, the system starts from an initial displacement away from the origin and converges, in accordance with the dissipation inequality \eqref{DissipationInequality}, toward the mean-square ultimate bound \eqref{UltimateBound}. Because this convergence is in the mean-square sense, the trajectories naturally exhibit occasional increases during the transient phase due to stochastic fluctuations. A disturbance is applied to the angle-of-attack dynamics between 15 and 30 seconds. We simulated the closed-loop system for two values of the $\mathcal{W}_\infty$ index, $\gamma^* \in \{1, 0.05\}$. As discussed below equation \eqref{OptimalWinfControlProblem}, a smaller $\mathcal{W}_\infty$ index produces a controller with better disturbance attenuation and faster transient behavior. This property is particularly important in stochastic environments, where fluctuations can substantially influence the system trajectories. 

\vspace{-1mm}
As shown, the system converges from an initial displacement to the mean-square ultimate bound \eqref{UltimateBound} per \eqref{DissipationInequality}, with natural stochastic transient increases. An angle-of-attack disturbance is applied between 15 and 30 seconds. Simulations for $\gamma^* \in \{1, 0.05\}$ confirm that a smaller $\mathcal{W}_\infty$ index yields faster transients and superior disturbance attenuation (see below \eqref{OptimalWinfControlProblem}), a property critical for mitigating fluctuations in stochastic environments.

\vspace{-1mm}
\section{Conclusion}
\vspace{-2mm}

This work introduced a stochastic robust $\mathcal{W}_\infty$ optimal control approach in weighted Sobolev \GR{spaces}. \GR{Although} stochastic state trajectories are almost surely non-differentiable\GR{, we demonstrated} that the expected state admits a weak derivative, enabling a consistent $\mathcal{W}_\infty$ cost functional that \GR{penalizes both the} expected state and its \GR{rate of change}. \GR{Based on this,} we established the stochastic robust $\mathcal{W}_\infty$ OCP and derived the \GR{corresponding} HJ equation\GR{, which was reformulated as} an equivalent SDP \GR{for efficient} dynamic output-feedback controllers \GR{synthesis.} \GR{Stability} analysis \GR{confirmed} mean-square ultimate boundedness\GR{, with the proposed controller minimizing this bound.} A numerical experiment \GR{using} F-16 aircraft \GR{longitudinal dynamics demonstrated superior} attenuation of stochastic fluctuations and rapid mitigation. Future \GR{work} will investigate explicit HJ \GR{solutions} for stochastic nonlinear systems.

\vspace{-2mm}

\bibliography{ifacconf}             % bib file to produce the bibliography

\begin{thebibliography}{19}
\providecommand{\natexlab}[1]{#1}
\providecommand{\url}[1]{\texttt{#1}}
\providecommand{\urlprefix}{URL }
\expandafter\ifx\csname urlstyle\endcsname\relax
  \providecommand{\doi}[1]{doi:\discretionary{}{}{}#1}\else
  \providecommand{\doi}{doi:\discretionary{}{}{}\begingroup
  \urlstyle{rm}\Url}\fi

\bibitem[{Aliyu and Boukas(2011)}]{aliyu2011extending}
Aliyu, M.D.S. and Boukas, E.K. (2011).
\newblock Extending nonlinear {$\mathcal{H}_2$}, {$\mathcal{H}_\infty$}
  optimisation to {$\mathcal{W}_{1,2}$}, {$\mathcal{W}_{1,\infty}$} spaces -
  part {I}: optimal control.
\newblock \emph{International Journal of Systems Science}, 42(5), 889--906.

\bibitem[{Cardoso et~al.(2021)Cardoso, Esteban, and Raffo}]{AUT2019}
Cardoso, D.N., Esteban, S.R., and Raffo, G.V. (2021).
\newblock A robust optimal control approach in the weighted sobolev space for
  underactuated mechanical systems.
\newblock \emph{Automatica}, 125, 1--11.

\bibitem[{Chen(1984)}]{chen1984linear}
Chen, C.T. (1984).
\newblock \emph{Linear system theory and design}.
\newblock Saunders college publishing.

\bibitem[{Chilali and Gahinet(1996)}]{chilali1996h}
Chilali, M. and Gahinet, P. (1996).
\newblock $\mathcal{H}_\infty$ design with pole placement constraints: An {LMI}
  approach.
\newblock \emph{Transactions on automatic control}, 41(3), 358--367.

\bibitem[{Knabner and Angermann(2003)}]{knabner2003numerical}
Knabner, P. and Angermann, L. (2003).
\newblock \emph{Numerical methods for elliptic and parabolic partial
  differential equations}.
\newblock Springer.

\bibitem[{Lofberg(2004)}]{lofberg2004yalmip}
Lofberg, J. (2004).
\newblock Yalmip: A toolbox for modeling and optimization in matlab.
\newblock In \emph{2004 IEEE international conference on robotics and
  automation}, 284--289. IEEE.

\bibitem[{Ma et~al.(2012)Ma, Zhang, and Hou}]{ma2012infinite}
Ma, H., Zhang, W., and Hou, T. (2012).
\newblock Infinite horizon $\mathcal{H}_2/\mathcal{H}_\infty$ control for
  discrete-time time-varying markov jump systems with multiplicative noise.
\newblock \emph{Automatica}, 48(7), 1447--1454.

\bibitem[{Peters and Stoorvogel(1994)}]{peters1994mixed}
Peters, M. and Stoorvogel, A. (1994).
\newblock Mixed $\mathcal{H}_2/\mathcal{H}_\infty$ control in a stochastic
  framework.
\newblock \emph{Linear algebra and its applications}, 205, 971--996.

\bibitem[{Rajpurohit and Haddad(2017)}]{rajpurohit2017nonlinear}
Rajpurohit, T. and Haddad, W.M. (2017).
\newblock Nonlinear--nonquadratic optimal and inverse optimal control for
  stochastic dynamical systems.
\newblock \emph{International Journal of Robust and Nonlinear Control}, 27(18),
  4723--4751.

\bibitem[{Schaft(1992)}]{van1992sub}
Schaft, A.V.D. (1992).
\newblock $\mathcal{L}_2$-gain analysis of nonlinear systems and nonlinear
  state-feedback $\mathcal{H}_\infty$ control.
\newblock \emph{Transactions on Automatic Control}, 37(6), 770--784.

\bibitem[{Sedhom et~al.(2020)Sedhom, El-Saadawi, Hatata, and
  Abd-Raboh}]{sedhom2020multistage}
Sedhom, B.E., El-Saadawi, M.M., Hatata, A.Y., and Abd-Raboh, E.E. (2020).
\newblock A multistage h-infinity-based controller for adjusting voltage and
  frequency and improving power quality in islanded microgrids.
\newblock \emph{International Transactions on Electrical Energy Systems},
  30(1), 1--34.

\bibitem[{Shreve et~al.(2004)}]{shreve2004stochastic}
Shreve, S.E. et~al. (2004).
\newblock \emph{Stochastic calculus for finance II: Continuous-time models},
  volume~11.
\newblock Springer.

\bibitem[{Treves(2016)}]{treves2016topological}
Treves, F. (2016).
\newblock \emph{Topological Vector Spaces, Distributions and Kernels: Pure and
  Applied Mathematics}, volume~25.
\newblock Elsevier.

\bibitem[{Ugrinovskii(1998)}]{ugrinovskii1998robust}
Ugrinovskii, V.A. (1998).
\newblock Robust $\mathcal{H}_\infty$ infinity control in the presence of
  stochastic uncertainty.
\newblock \emph{International Journal of Control}, 71(2), 219--237.

\bibitem[{Wu et~al.(2011)Wu, Zhang, and Chen}]{wu2011multiobjective}
Wu, C.H., Zhang, W., and Chen, B.S. (2011).
\newblock Multiobjective $\mathcal{H}_2/\mathcal{H}_\infty$ synthetic gene
  network design based on promoter libraries.
\newblock \emph{Mathematical Biosciences}, 233(2), 111--125.

\bibitem[{Yong and Zhou(2012)}]{yong2012stochastic}
Yong, J. and Zhou, X.Y. (2012).
\newblock \emph{Stochastic controls: Hamiltonian systems and HJB equations},
  volume~43.
\newblock Springer Science \& Business Media.

\bibitem[{Zhang and Chen(2006)}]{zhang2006state}
Zhang, W. and Chen, B.S. (2006).
\newblock State feedback $\mathcal{H}_\infty$ control for a class of nonlinear
  stochastic systems.
\newblock \emph{SIAM journal on control and optimization}, 44(6), 1973--1991.

\bibitem[{Zhang et~al.(2017)Zhang, Xie, and Chen}]{zhang2017stochastic}
Zhang, W., Xie, L., and Chen, B.S. (2017).
\newblock \emph{Stochastic $\mathcal{H}_2$/$\mathcal{H}_\infty$ control: A Nash
  game approach}.
\newblock CRC Press.

\bibitem[{Zhu et~al.(2018)Zhu, Suo, Chen, Zhang, and Li}]{zhu2018mixed}
Zhu, B., Suo, M., Chen, Y., Zhang, Z., and Li, S. (2018).
\newblock Mixed $\mathcal{H}_\infty$ and passivity control for a class of
  stochastic nonlinear sampled-data systems.
\newblock \emph{Journal of the Franklin Institute}, 355(7), 3310--3329.

\end{thebibliography}
                                                     % with bibtex (preferred)

\end{document}